\newtheorem{theorem}{Theorem}[section]
\newtheorem{lemma}[theorem]{Lemma}
\newtheorem{proposition}[theorem]{Proposition}
\theoremstyle{definition}
\newtheorem{definition}[theorem]{Definition}
\theoremstyle{remark}
\newtheorem{remark}[theorem]{Remark}
\numberwithin{equation}{section}
\DeclareMathOperator{\Pic}{Pic}
\DeclareMathOperator{\Tot}{Tot}
\DeclareMathOperator{\dlog}{dlog}
\DeclareMathOperator{\cls}{cls}
\DeclareMathOperator{\NS}{NS}
\DeclareMathOperator{\red}{red}
\DeclareMathOperator{\Spec}{Spec}
\DeclareMathOperator{\image}{im}
\DeclareMathOperator{\cosk}{cosk}
\DeclareMathOperator{\id}{id}
\DeclareMathOperator{\coker}{coker}
\DeclareMathOperator{\et}{\text{\'et}}
\DeclareMathOperator{\ab}{ab}
\DeclareMathOperator{\cont}{cont}
\DeclareMathOperator{\Der}{Der}
\DeclareMathOperator{\Char}{char}
\newcommand{\N}{\mathbb{N}}
\newcommand{\F}{\mathbb{F}}
\newcommand{\A}{\mathbb{A}}
\newcommand{\G}{\mathbb{G}}
\newcommand{\Z}{\mathbb{Z}}
\newcommand{\C}{\mathbb{C}}
\begin{document}

\title[The Picard Group of Simply Connected Varieties]{The Picard Group of Simply
Connected Regular Varieties and Stratified Line Bundles}


\author{Lars Kindler}
\thanks{This work was supported by the Sonderforschungsbereich/Transregio 45 ``Periods,
moduli spaces and the arithmetic of algebraic varieties'' of the DFG} 

\address{%
Universit\"at Duisburg-Essen, Mathematik, 45117 Essen, Germany}
\email{lars.kindler@uni-due.de}
\date{April 12, 2011}




\begin{abstract}
We prove that the Picard group of a regular simply connected variety over an algebraically closed field 
of arbitrary characteristic is finitely generated. The main difficulty to overcome is the
unavailability of resolution of singularities. From this we deduce that in positive
characteristic there exist no nontrivial stratified line bundles on such a variety, and we
present a complex analog.
\end{abstract}

\maketitle
\section{Introduction}
Let $k$ be an algebraically closed field of characteristic $p\geq 0$, and $U$ a regular
connected $k$-scheme of finite type. Assume that $U$ is simply
connected, i.e.~$\pi_1^{\et}(U,\bar{u})=1$ for some geometric point $\bar{u}$ of $U$. If
there is a dominant open immersion of $k$-schemes $\iota:U\hookrightarrow X$, for some
regular proper
$k$-scheme $X$, then $\Pic U$ is finitely generated. In fact, by the regularity
assumptions we have  surjections $\Pic X\rightarrow \Pic U$ and
$\pi_1(U,\bar{u})\rightarrow \pi_1(X,\iota\bar{u})$, so $X$ is also simply connected and
it suffices to show that $\Pic X$ is finitely generated. By
the properness of $X$ 
the relative Picard functor for $X/k$  is representable by a $k$-group scheme $\Pic_{X/k}$ locally of finite
type, and the connected component of the origin with its reduced structure
$\Pic^{0,\red}_{X/k}$ is an abelian variety. The Kummer sequence in \'etale cohomology
then shows that
$\Pic^{0,\red}_{X/k}[\ell^n]=\hom_{\cont}(\pi_1(X,\iota\bar{u}),\Z/\ell^n\Z)=1$ for any
prime $\ell\neq p$. This
implies that $\Pic^0_{X/k}(k)$ is trivial, as for an abelian variety $A$  of dimension $g$ over $k$, we know
that $A[\ell^n]\cong (\Z/\ell^n\Z)^{2g}$. Hence the Picard group $\Pic(X)=\Pic_{X/k}(k)$ is equal to the N\'eron-Severi group
$\NS(X):=\Pic_{X/k}(k)/\Pic_{X/k}^{0}(k)$, which is finitely generated.

The first main result of this note is a generalization of the above fact, proven in
Section \ref{section:simplyconnectedvarieties}.
\begin{theorem}\label{picfg}Let $k$ be an algebraically closed field of characteristic
	$p\geq 0$. If $U$ is a connected, regular, separated $k$-scheme of finite type, and if the
	maximal abelian pro-$\ell$ quotient $\pi_1(U)^{ab,(\ell)}$ is trivial for some
	$\ell\neq p$, then $\Pic
	U$ is finitely generated.
\end{theorem}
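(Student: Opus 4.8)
\emph{Proposed strategy.} The plan is to reduce to the proper case treated in the introduction, but applied to a \emph{normal} compactification, and then to repair the discrepancy between the Picard group and the divisor class group caused by the singularities, using de Jong alterations in place of resolution of singularities. First some harmless reductions: $k$ is algebraically closed, hence perfect, so ``regular'' means ``smooth'', and since $U$ is connected and normal it is integral. By Nagata's theorem $U$ is a dense open subscheme of a proper $k$-scheme; replacing the latter by the normalization of the component containing $U$, I obtain a normal proper $k$-variety $\bar U$ with $U\subseteq\bar U$ dense open and $Z:=\bar U\setminus U$ a proper closed subset. Because $U$ is regular, $\Pic U=\mathrm{Cl}\,U$, and because $\bar U$ is normal the excision sequence for divisor class groups reads $\mathrm{Cl}\,U=\mathrm{Cl}\,\bar U/\langle[W]: W\subseteq Z\text{ a prime divisor}\rangle$. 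Only finitely many such $W$ occur, so it suffices to show that $\mathrm{Cl}\,\bar U$ is finitely generated.

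To bound $\mathrm{Cl}\,\bar U$ I would choose a \emph{Galois} alteration $h\colon Y\to\bar U$ (de Jong): $Y$ a smooth proper $k$-variety, a finite group $G$ acting on $Y$ over $\bar U$, and $Y/G\to\bar U$ finite and radicial. Pull-back of divisor classes gives a homomorphism $h^{*}\colon\mathrm{Cl}\,\bar U\to\mathrm{Cl}\,Y=\Pic Y$, whose image lands in the invariants $(\Pic Y)^{G}$ since $h\circ\sigma=h$ for all $\sigma\in G$. Its kernel is annihilated by $\deg h$ (from $h_{*}h^{*}=(\deg h)\cdot\id$), hence finite: the singular locus of the normal $\bar U$ has codimension $\geq 2$, so $\mathrm{Cl}\,\bar U[m]=\Pic(\bar U^{\mathrm{reg}})[m]$, and by the Kummer sequence (fppf if $m$ is a power of $p$) this is finite because $H^{1}(\bar U^{\mathrm{reg}},\mu_{m})$ is finite (finiteness of étale, resp.\ fppf, cohomology with finite coefficients on a finite-type $k$-scheme). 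So it remains to prove that $(\Pic Y)^{G}$ is finitely generated.

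Put $A:=\Pic^{0,\red}_{Y/k}$, an abelian variety carrying a $G$-action; the sequence $0\to A(k)\to\Pic Y\to\NS Y\to 0$ is $G$-equivariant, and $\NS Y$ is finitely generated, so it is enough to see that $A(k)^{G}$ is finite, equivalently that $A$ contains no positive-dimensional abelian subvariety on which $G$ acts trivially. Suppose $A'\subseteq A$ were one. Dualizing produces a $G$-equivariant surjection $\mathrm{Alb}(Y)\to(A')^{\vee}=:B$ whose linear $G$-action is trivial; composing with the Albanese morphism yields a nonconstant $\psi_{0}\colon Y\to B$ with $\psi_{0}(\sigma y)=\psi_{0}(y)+t_{\sigma}$, where $\sigma\mapsto t_{\sigma}$ is a homomorphism $G\to B(k)$ (cocycle condition for the trivial linear action) and so has finite image; dividing $B$ by that image gives a nonconstant $G$-invariant $\psi\colon Y\to B'$ with $B'$ a nonzero abelian variety. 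Then $\psi$ factors through $Y/G$, and since $Y/G\to\bar U$ is a universal homeomorphism, restricting over $U$ gives a scheme $U'$ that is a universal homeomorphism over $U$ (so $\pi_1(U')\cong\pi_1(U)$) together with a nonconstant morphism $f\colon U'\to B'$. But this is impossible: for every $n$, $f^{*}([\ell^{n}]\colon B'\to B')$ is a finite étale cover of $U'$, a torsor under the $\ell$-group $B'[\ell^{n}]$, hence classified by $H^{1}_{\et}(U',B'[\ell^{n}])=\Hom_{\cont}(\pi_1(U')^{ab,(\ell)},B'[\ell^{n}])=0$ and therefore split; so $f$ is $\ell^{n}$-divisible for all $n$ in the finitely generated group of morphisms $U'\to B'$ modulo translation (which reduces, via the smooth locus of the normalization of $U'$, to $\Hom$ between semi-abelian varieties), forcing $f$ to be constant. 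This contradiction shows $A^{G}$ is finite, whence $(\Pic Y)^{G}$, and therefore $\mathrm{Cl}\,\bar U$ and $\Pic U$, is finitely generated.

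The main obstacle I expect is precisely the passage through the alteration — trading resolution of singularities for de Jong's theorem — made rigorous: keeping track of the torsion of $\mathrm{Cl}\,\bar U$ and of the group-cohomology correction terms $H^{i}(G,\mathcal O^{\ast})$ forced by the fact that $h$ is only generically finite and is ramified, handling the radicial ``$p$-part'' $Y/G\to\bar U$ separately, and, above all, isolating exactly the abelian-variety piece $A^{G}$ that a smooth model of $\bar U$ can carry so that the hypothesis $\pi_1(U)^{ab,(\ell)}=1$ can be used to annihilate it. The remaining ingredients — Nagata compactification, normalization, the excision sequence, finiteness of cohomology — are routine.
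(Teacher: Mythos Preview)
Your route is genuinely different from the paper's. The paper never tries to control $\mathrm{Cl}\,\bar U$ through a single Galois alteration and $G$-invariants on $\Pic Y$. Instead it takes a full proper hypercovering $X_\bullet\to X$ (with $X$ a normal compactification of $U$) by regular proper schemes, invokes representability of the \emph{simplicial} Picard functor $\Pic_{X_\bullet/k}$ and the fact that $\Pic^{0,\red}_{X_\bullet/k}$ is semi-abelian, and uses cohomological descent for $\mu_{\ell^n}$ to kill $\Pic^0_{X_\bullet/k}(k)$, so that $\Pic(X_\bullet)=\NS(X_\bullet)$ is finitely generated; two comparison lemmas then pass from $\Pic(X_\bullet)$ to $\Pic(U_\bullet)$ and to $\Pic U$. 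Your approach, if it worked, would bypass the simplicial Picard scheme entirely.

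There is, however, a genuine gap at the point you yourself flag. De Jong's theorem does \emph{not} give $Y/G\to\bar U$ finite and radicial; it only gives that $k(Y)^G/k(\bar U)$ is purely inseparable. The map $Y/G\to\bar U$ is proper and generically finite, but the alteration $Y\to\bar U$ typically contracts subvarieties and dividing by $G$ does not undo this, so $(Y/G)|_U\to U$ is in general not a universal homeomorphism and $\pi_1(U')\cong\pi_1(U)$ fails. Stein-factoring $Y/G\to Z\to\bar U$ does produce a universal homeomorphism $Z\to\bar U$, but your $G$-invariant $\psi\colon Y/G\to B'$ need not descend to $Z$, since the proper connected fibres of $Y/G\to Z$ can map nonconstantly to an abelian variety; and shrinking $U$ to the locus where $Y/G\to\bar U$ is finite can remove codimension-$1$ loci, destroying the hypothesis $\pi_1^{ab,(\ell)}=1$. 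This descent problem is precisely what the paper's higher simplicial levels (and the comparison $\Pic U\hookrightarrow\Pic(\cosk_0 U_0)_\bullet\to\Pic U_\bullet$) are designed to handle. A secondary issue: your appeal to ``finiteness of fppf cohomology'' for $H^1_{\mathrm{fppf}}(\bar U^{\mathrm{reg}},\mu_{p^n})$ is not a standard theorem for non-proper schemes --- unwinding the fppf Kummer sequence, its finiteness is equivalent to the very finiteness of $\Pic[p^n]$ you are trying to establish; this could be sidestepped by invoking Gabber's prime-to-$p$ alterations so that $\deg h$ is coprime to $p$, but you do not do so.
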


Note that it is not known in general, whether $U$ as in the theorem can be embedded into a
regular proper $k$-scheme, because resolution of singularities is not known to hold. To
circumvent this, we use de Jong's theory of alterations and simplicial techniques. More
precisely, in Section \ref{simplicialsec} we study the simplicial Picard group and the
simplicial N\'eron-Severi group of a simplicial scheme $X_{\bullet}$, and prove finiteness
statements similar to the nonsimplicial case. This relies on results of
\cite{Ramachandran/Motives} and \cite{Srinivas/Motives}. We apply these statements in Section
\ref{section:simplyconnectedvarieties} to prove the theorem.

As an application, we study in Section \ref{section:stratified} stratified line bundles on
$U$, i.e.~line bundles coming
with a $\mathcal{D}_{U/k}$-action, where $\mathcal{D}_{U/k}$ is the sheaf of differential
operators of $X$. Extending an argument from \cite{Esnault/Giesecker},  we prove the
following statement.
\begin{theorem}\label{thm:nostratifiedlinebundles}
	Let $k$ be an algebraically closed field of characteristic $p> 0$.
	If $U$ is a regular, connected $k$-scheme of finite type such that the
	maximal abelian pro-prime-to-$p$ quotient
	$\pi_1(U)^{\ab,(p')}$ of $\pi_1(U,\bar{u})$ is trivial, then
	every stratified line bundle on $U$ is trivial.
\end{theorem}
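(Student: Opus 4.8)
The plan is to combine the description of stratified bundles as Frobenius-divided sheaves with Theorem~\ref{picfg}. Recall (Gieseker, Katz; in general dos~Santos) that since $k$ is perfect, the category of stratified bundles on $U$ is equivalent to the category of $F$-divided bundles, i.e.~systems $(E_i,\sigma_i)_{i\ge 0}$ of vector bundles $E_i$ on $U$ together with isomorphisms $\sigma_i\colon F^*E_{i+1}\xrightarrow{\ \sim\ }E_i$, where $F\colon U\to U$ is the absolute Frobenius. For a stratified \emph{line} bundle $L$ the underlying $E_0=L$ then satisfies $L\cong F^{n*}E_n$ for all $n$, and since $F^*$ acts on $\Pic U$ as multiplication by $p$, the class $[L]$ lies in $\bigcap_{n\ge 0}p^n\Pic U$; the same holds for each $E_i$, because $E_i$ underlies the shifted stratified line bundle $(E_{i+j},\sigma_{i+j})_j$.

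Next I would pin down $\Pic U$ and $\mathcal O(U)^\times$. The hypothesis $\pi_1(U)^{\ab,(p')}=1$ forces $\pi_1(U)^{\ab,(\ell)}=1$ for every $\ell\neq p$, so Theorem~\ref{picfg} applies and $\Pic U$ is a finitely generated abelian group. Now fix a prime $\ell\neq p$. Since $k$ is algebraically closed, $\mu_\ell\cong\Z/\ell$ as \'etale sheaves, so
$$H^1_{\et}(U,\mu_\ell)=\Hom_{\cont}(\pi_1(U),\Z/\ell)=\Hom(\pi_1(U)^{\ab,(p')},\Z/\ell)=0.$$
The long exact sequence attached to the Kummer sequence $1\to\mu_\ell\to\G_m\xrightarrow{\,\ell\,}\G_m\to1$ then yields both $\Pic(U)[\ell]=0$ and the $\ell$-divisibility of $\mathcal O(U)^\times$. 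As $U$ is connected and regular over an algebraically closed field, $\mathcal O(U)^\times/k^\times$ is a finitely generated torsion-free abelian group (Rosenlicht's unit theorem), so $\ell$-divisibility for one such $\ell$ already forces it to vanish; hence $\mathcal O(U)^\times=k^\times$, and running over all $\ell\neq p$ we get in addition that the torsion subgroup of $\Pic U$ is a $p$-group.

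Finally I would assemble the pieces. Writing $\Pic U\cong\Z^r\oplus T$ with $T$ a finite abelian $p$-group, we have $\bigcap_{n}p^n\Pic U=0$, since multiplication by $p$ has no nonzero infinitely divisible element on $\Z^r$ and is nilpotent on $T$. Therefore $L\cong\mathcal O_U$, and by the same argument $E_i\cong\mathcal O_U$ for every $i$. Choosing trivializations $E_i\cong\mathcal O_U$, the transition data $\sigma_i$ becomes a unit $u_i\in\operatorname{Aut}(\mathcal O_U)=\mathcal O(U)^\times=k^\times$, and the $F$-divided bundle is isomorphic to the trivial one as soon as one solves $u_i=v_{i+1}^{\,p}/v_i$ recursively with $v_i\in k^\times$, which is possible because $k^\times$ is $p$-divisible. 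Hence every stratified line bundle on $U$ is trivial. The genuinely delicate point — and the reason the hypothesis is imposed on the full pro-prime-to-$p$ quotient rather than on a single pro-$\ell$ quotient — is the vanishing $\mathcal O(U)^\times=k^\times$: without it the Frobenius tower of automorphisms need not be trivializable (witness $\G_m$), so this step, together with the input of Theorem~\ref{picfg}, is where I expect the substance of the proof to lie.
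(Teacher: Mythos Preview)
Your proof is correct and follows the same outline as the paper: Katz's Frobenius-divided description, Theorem~\ref{picfg} for finite generation of $\Pic U$, Kummer theory to eliminate prime-to-$p$ torsion so that $\bigcap_n p^n\Pic U=0$, and finally trivialization of the transition units using $\mathcal{O}(U)^\times=k^\times$ and $p$-divisibility of $k^\times$ (the paper packages this last step as Lemma~\ref{isoclassesgeneral}). The one substantive variation is how you obtain $\mathcal{O}(U)^\times=k^\times$: the paper's Proposition~\ref{globalunits} argues that a nonconstant unit gives a dominant map to $\G_m$ and hence an open homomorphism $\pi_1(U)\to\pi_1(\G_m)$, whereas you combine Rosenlicht's unit theorem with the $\ell$-divisibility of $\mathcal{O}(U)^\times$ coming from the Kummer sequence; both are clean, and yours avoids the appeal to openness of $\pi_1$ under dominant maps.

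One small correction to your closing commentary: the units step needs only a \emph{single} prime $\ell\neq p$ (your own argument shows this, and the paper states Proposition~\ref{globalunits} and Lemma~\ref{isoclassesgeneral} under that weaker hypothesis). The place where the full pro-prime-to-$p$ assumption is genuinely required is the vanishing of \emph{all} prime-to-$p$ torsion in $\Pic U$, which you use to conclude that the torsion subgroup is a $p$-group.
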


In \cite{Esnault/Giesecker}, this theorem is proven for
stratified bundles of arbitrary rank, under the additional assumptions that $U$ is
\emph{projective} and $\pi_1(U)=1$.

Proving this theorem was the original motivation to study Theorem \ref{picfg},
and it originated out of work for my thesis, in which possible extensions of the results of
\cite{Esnault/Giesecker} to non-projective varieties are studied. 

Finally, in Section \ref{section:complex} we reproduce an argument of H\'el\`ene Esnault,
showing that a complex analog of Theorem
\ref{thm:nostratifiedlinebundles} is ``not quite'' correct. More precisely, we show that
on a simply connected, regular, complex variety $U$, every stratified line bundle is of the
form $(\mathcal{O}_U,d+\omega)$, with $\omega$ a closed $1$-form. The reason for this
discrepancy is
that in positive characteristic, simply connectedness is a ``stronger'' condition than
in characteristic $0$, see Remark \ref{rem:end}.

{\it Acknowledgment:} I happily acknowledge,
and express tremendous gratitude
for the great influence and generous help of my adviser H\'el\`ene Esnault.

\section{Simplicial Picard Groups}\label{simplicialsec}
For background on the simplicial techniques used in this section, we refer to
\cite{HodgeIII}.

In \cite[4.1]{Srinivas/Motives} and \cite[3.1]{Ramachandran/Motives}, the
simplicial Picard group and the simplicial Picard functor are defined as follows:
\begin{definition}\label{definition:simplicialpic}
	Let $S$ be a scheme. If $\delta_k^i:X_i\rightarrow X_{i-1}$ denote the face maps of a simplicial $S$-scheme
$X_{\bullet}$, then $\Pic(X_\bullet)$ is defined to be the group of isomorphism classes of pairs $(L,\alpha)$ consisting of a
line bundle $L$ on $X_0$ and an isomorphism $\alpha:(\delta^1_0)^*L\rightarrow (\delta^1_1)^*L$ on
$X_1$, satisfying cocycle condition $(\delta_2^2)^*(\alpha)
(\delta^2_0)^*(\alpha)=(\delta_1^2)^*(\alpha)$ on $X_2$.  The
simplicial Picard functor $\Pic_{X_\bullet/S}$ is obtained by fpqc-sheafifying the functor
$T\mapsto
\Pic(X_\bullet\times_S T)$.
\end{definition}
It turns out that $\Pic(X_\bullet)$ is
canonically isomorphic to $\mathbb{H}^1(X_\bullet, \mathcal{O}^\times_{X_\bullet})$ and to the
group of isomorphism classes of invertible $\mathcal{O}_{X_\bullet}$-modules, see
\cite[A.3]{Srinivas/Motives}. We will use the following representability and finiteness
statements.
\begin{theorem}\label{representabilityandfiniteness}
	Let $k$ be an algebraically closed field, $X$ a proper $k$-scheme of
	finite type, and $X_\bullet$ a proper, simplicial $k$-scheme of finite
	type (which means that all the $X_n$ are proper, and of finite type over
	$k$).
\begin{enumerate}
	\item The relative Picard
		functor associated to $X\rightarrow \Spec k$ is representable by a separated commutative group scheme
		$\Pic_{X/k}$, locally of finite
		type over $k$, which is the disjoint union of open, quasi-projective
		subschemes, see \cite[Cor. XII.1.2]{SGA6}.
	\item\label{nsfg} The N\'eron-Severi group $\NS(X)=\Pic_{X/k}(k)/\Pic^0_{X/k}(k)$ is a finitely
		generated abelian group, see \cite[Thm. XIII.5.1]{SGA6}.
	\item If $X$ also \emph{normal}, then
		the connected component $\Pic_{X/k}^0$ of the origin is projective, see
		\cite[Thm. 5.4, Rem.
		5.6]{Kleiman/PicardScheme}, so passing to the reduced structure
		$\Pic_{X/k}^{0,\red}$ gives an abelian
		variety.
	\item The simplicial Picard
		functor is representable by a group scheme
		$\Pic_{X_\bullet/k}$, locally of finite type over $k$,
		see \cite[Thm. 3.2]{Ramachandran/Motives}.
	\item If $X_n$ is reduced for all $n$, and $X_0$ normal, then the connected
		component $\Pic_{X_\bullet/k}^{0,\red}$ of the origin is semi-abelian, see \cite[Cor. 3.5]{Ramachandran/Motives}.
\end{enumerate}
\end{theorem}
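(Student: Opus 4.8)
The statement is a compendium of results from the literature, so the plan is simply to reduce each item to the cited reference, checking that the hypotheses match and supplying the few standard facts about group schemes over an algebraically closed field that are needed to glue them together.

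For (1), since $k$ is algebraically closed the proper $k$-scheme $X$ possesses a $k$-rational point, and under this hypothesis \cite[Cor.~XII.1.2]{SGA6} gives that $\Pic_{X/k}$ is representable by a separated commutative $k$-group scheme, locally of finite type, which is moreover a disjoint union of open quasi-projective subschemes. Granting this, (2) is the theorem of the base \cite[Thm.~XIII.5.1]{SGA6}: $\NS(X)=\Pic_{X/k}(k)/\Pic^0_{X/k}(k)$ is a finitely generated abelian group. For (3), when $X$ is moreover normal, \cite[Thm.~5.4, Rem.~5.6]{Kleiman/PicardScheme} yields that $\Pic^0_{X/k}$ is projective over $k$; hence $\Pic^{0,\red}_{X/k}$ is a reduced, connected, proper $k$-scheme of finite type, and I would conclude that it is an abelian variety using that $k$ is perfect: the product of two reduced finite-type $k$-schemes is again reduced, so $\Pic^{0,\red}_{X/k}$ carries a group structure; a reduced group scheme locally of finite type over a perfect field is smooth; and a smooth proper connected $k$-group scheme is an abelian variety. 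Finally, (4) and (5) are \cite[Thm.~3.2]{Ramachandran/Motives} and \cite[Cor.~3.5]{Ramachandran/Motives} applied to the proper simplicial $k$-scheme $X_\bullet$; the conditions imposed in (5), namely that each $X_n$ be reduced and $X_0$ normal, are exactly those under which Ramachandran realizes $\Pic^{0,\red}_{X_\bullet/k}$ as an extension of an abelian variety (governed by $\Pic^0$ of $X_0$ via (3)) by a torus, hence as a semi-abelian variety.

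I expect no genuine obstacle; the only step that requires an argument beyond a direct citation is the passage to the reduced subscheme in (3), and this is precisely where the hypothesis that $k$ be algebraically closed (hence perfect) enters — both to preserve reducedness under products and to upgrade ``reduced'' to ``smooth'' for group schemes of finite type over $k$.
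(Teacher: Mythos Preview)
Your proposal is correct and matches the paper's treatment: the paper gives no separate proof of this theorem at all, treating it as a list of known results with the references embedded in the statement itself. Your elaboration on why the cited hypotheses are met, and the standard argument in (3) that the reduced identity component is a smooth proper connected group scheme over the perfect field $k$ and hence an abelian variety, is exactly the intended reading.
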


Statement \ref{nsfg} from above can be generalized to the simplicial situation. In the case
that $\Char(k)=0$, this is sketched in \cite[Sec.
3]{Barbieri/NeronSeveri}.
\begin{proposition}\label{simplicialnsfg}
	Let $k$ be an algebraically closed field. For a proper reduced simplicial $k$-scheme
	$X_\bullet$, with $X_n$ of finite type for all $n$ and $X_0$ normal, the \emph{simplicial N\'eron-Severi
	group} $\NS(X_\bullet):=\Pic_{X_\bullet/k}(k)/\Pic_{X_\bullet/k}^0(k)$ is finitely
	generated.
\end{proposition}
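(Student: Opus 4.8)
The plan is to study the forgetful homomorphism $c\colon \Pic_{X_\bullet/k}\to\Pic_{X_0/k}$, $(L,\alpha)\mapsto L$, separating the contributions of its kernel and of its image to the component group. Unwinding Definition~\ref{definition:simplicialpic}, or equivalently reading off the low-degree part of the hypercohomology spectral sequence $E_1^{p,q}=\mathbb{R}^q\pi_{p\ast}\G_m\Rightarrow\mathbb{R}^{p+q}\pi_\ast\G_m$ for the structure maps $\pi_p\colon X_p\to\Spec k$ (cf.\ \cite{HodgeIII}), one gets an exact sequence of fppf sheaves over $k$
\[
0\longrightarrow\mathcal{H}\longrightarrow\Pic_{X_\bullet/k}\xrightarrow{\ c\ }\Pic_{X_0/k},
\]
where $\mathcal{H}$ is the first cohomology of the three-term complex of commutative $k$-group schemes $\pi_{0\ast}\G_{m,X_0}\to\pi_{1\ast}\G_{m,X_1}\to\pi_{2\ast}\G_{m,X_2}$ with differentials the alternating sums of the face maps. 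Since each $X_n$ is proper and reduced and $k$ is algebraically closed, $\pi_{n\ast}\mathcal{O}_{X_n}\cong k^{r_n}$ for $r_n=\#\pi_0(X_n)$, compatibly with arbitrary base change, so $\pi_{n\ast}\G_{m,X_n}\cong\G_m^{r_n}$ and the differentials are monomial. Hence $\mathcal{H}$ is the first cohomology of a complex of split tori, so it is a diagonalizable $k$-group scheme of finite type; in particular it has finitely many connected components.

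Next I compose $c$ with $\Pic_{X_0/k}\to\pi_0(\Pic_{X_0/k})$ to obtain $\phi\colon\Pic_{X_\bullet/k}\to\pi_0(\Pic_{X_0/k})$. By Theorem~\ref{representabilityandfiniteness} the group $\pi_0(\Pic_{X_0/k})(k)=\NS(X_0)$ is finitely generated (here $X_0$ is proper of finite type), and since $X_0$ is also normal, part~(3) of the same theorem shows $\Pic_{X_0/k}^0$ is proper, hence of finite type. Put $P:=\ker\phi=c^{-1}(\Pic_{X_0/k}^0)$; this is an open subgroup scheme of $\Pic_{X_\bullet/k}$ containing $\Pic_{X_\bullet/k}^0$, so $\pi_0(P)$ is a subgroup scheme of $\pi_0(\Pic_{X_\bullet/k})$ with quotient $\phi(\Pic_{X_\bullet/k})\subseteq\pi_0(\Pic_{X_0/k})$. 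Taking $k$-points (exact for \'etale $k$-group schemes) yields a short exact sequence
\[
0\longrightarrow\pi_0(P)(k)\longrightarrow\NS(X_\bullet)\longrightarrow\phi(\Pic_{X_\bullet/k})(k)\longrightarrow0
\]
whose right-hand term is a subgroup of the finitely generated group $\NS(X_0)$; so it remains to see that $\pi_0(P)(k)$ is finite.

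For that I use $\ker(c|_P)=\mathcal{H}$: the quotient $\Pic_{X_\bullet/k}/\mathcal{H}$ exists as a $k$-group scheme locally of finite type and maps monomorphically, hence by an immersion, to $\Pic_{X_0/k}$, so $P/\mathcal{H}$ is a subgroup scheme of the finite-type group $\Pic_{X_0/k}^0$ and is therefore of finite type. Applying the right-exact sequence $\pi_0(\mathcal{H})\to\pi_0(P)\to\pi_0(P/\mathcal{H})\to1$ associated to $0\to\mathcal{H}\to P\to P/\mathcal{H}\to0$ (valid for group schemes locally of finite type over a field, since a quotient of a connected group scheme is connected) presents $\pi_0(P)(k)$ as an extension of the finite group $\pi_0(P/\mathcal{H})(k)$ by a quotient of the finite group $\pi_0(\mathcal{H})(k)$. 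Hence $\pi_0(P)(k)$ is finite, and combined with the previous sequence $\NS(X_\bullet)$ is finitely generated.

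The main obstacle is the first step: identifying $\ker c$ precisely enough to see it is diagonalizable. This uses the properness and reducedness of the $X_n$ to compute $\pi_{n\ast}\G_m$ as a split torus, together with the exactness of the bottom row of the spectral sequence. Everything afterwards is bookkeeping with component groups; the one subtlety there is that $\Pic_{X_\bullet/k}$ is only \emph{locally} of finite type, which is why the argument first divides out $\mathcal{H}$ to land inside the finite-type group $\Pic_{X_0/k}^0$ before taking $\pi_0$.
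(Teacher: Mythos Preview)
Your proof is correct and follows the same overall strategy as the paper: both arguments use the low-degree exact sequence from the spectral sequence $E_1^{p,q}=H^q(X_p,\G_m)\Rightarrow\mathbb{H}^{p+q}(X_\bullet,\G_m)$ to identify the kernel of the forgetful map to $\Pic_{X_0/k}$ with a diagonalizable group of finite type (your $\mathcal{H}$, the paper's $T$), and both finish by comparing with the finitely generated group $\NS(X_0)$.

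The difference lies in the treatment of the image side. The paper works with $K=\ker(\delta_0^*-\delta_1^*:\Pic_{X_0/k}\to\Pic_{X_1/k})^{\red}$, uses the normality of $X_0$ to see that $K^0$ is an abelian variety, and then invokes the affineness of the target $W$ of the $d_2$-differential to conclude that $K^0$ lies in the image of $\Pic_{X_\bullet/k}^{\red}$; from there it runs a two-step comparison $\NS(X_\bullet)\to K(k)/K^0(k)\to\NS(X_0)$, each with finite kernel. Your argument bypasses this entirely: you take $P=c^{-1}(\Pic_{X_0/k}^0)$ and observe that $P/\mathcal{H}$ embeds as a closed subgroup of the finite-type group $\Pic_{X_0/k}^0$, so $P$ has finite $\pi_0$. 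This is more direct and avoids both the $d_2$-analysis and any appeal to the structure of $K^0$. In particular, your argument shows that the normality hypothesis on $X_0$ is not actually needed here: you only use that $\Pic_{X_0/k}^0$ is of finite type, which holds for any connected group scheme locally of finite type over a field, so your citation of part~(3) of Theorem~\ref{representabilityandfiniteness} (properness) is stronger than what you use. The paper's route, by contrast, genuinely needs $K^0$ to be proper to kill the map to the affine $W$.
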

\begin{proof}
	Let $\tau:X_\bullet\rightarrow \Spec k$ denote the structure morphism.
	The spectral sequence (see e.g. \cite[(5.2.3.2)]{HodgeIII})
	\[E_1^{p,q}=H^q(X_p,\mathcal{O}_{X_p}^\times)\Longrightarrow
	\mathbb{H}^{p+q}(X_\bullet,\mathcal{O}_{X_\bullet}^\times)\]
	gives rise to the exact sequence
	\[ 0\longrightarrow E^{1,0}_{2}\longrightarrow
	\mathbb{H}^{1}(X_{\bullet},\mathcal{O}_{X_{\bullet}})\longrightarrow
	E_2^{0,1}\stackrel{d_2}{\longrightarrow} E^{2,0}_2\]
	and thus, as in \cite[p. 284]{Ramachandran/Motives}, after sheafifying we get an
	exact sequence of fpqc-sheaves (in fact group schemes by the representability of
	$\Pic_{X_\bullet/k}$)
	\[0\rightarrow T\rightarrow \Pic^{\red}_{X_\bullet/k}\rightarrow
	K\stackrel{d_2}{\rightarrow} W,\]
	where $K:=\ker( \delta_0^*- \delta_1^*:\Pic_{X_0/k}\rightarrow
	\Pic_{X_1/k})^{\red}$,\[T:=\frac{\ker(
	(\tau_1)_*\mathbb{G}_{m,X_1}\stackrel{\delta_0^*-\delta_1^*+\delta_2^*}{\longrightarrow}
	(\tau_2)_*\mathbb{G}_{m,X_2})}{\image(
	(\tau_0)_*\mathbb{G}_{m,X_0}\stackrel{\delta^*_0-\delta^*_1}{\longrightarrow}
	(\tau_1)_*\mathbb{G}_{m, X_1})},\]
	and $W$ is affine.
	The scheme $T$  is an affine $k$-scheme with
	finitely many connected components, and a $k$-torus as neutral component (compare \cite[top of
	p. 284]{Ramachandran/Motives}). This follows from the fact that $\tau_n$ is proper and $X_n$
	reduced, since this implies that for every $k$-scheme $S$ we have
	$\tau_{n,*}\G_{m,X_n}(S)=\mathcal{O}_{X_n\times_k S}^\times(X_n\times_k
	S)=\G_{m,k}(S)^{\pi_0(X_n)}$, see \cite[Prop. 7.8.6]{EGAIII}.

	As $X_0$ is normal, $\Pic_{X_0/k}^{0,\red}$ is an abelian variety, and hence so is
	$K^0$. Since $W$ is affine, any homomorphism $K^0\rightarrow W$ is trivial, so
	$K^0\subset \ker(d_2)^0$. But $\ker(d_2)^0\subset K^0$, so we have equality.
	Moreover, $\Pic_{X_\bullet/k}^{0,\red}$ maps surjectively to $K^0$. This shows
	that the kernel of the map 
	\[ \Pic_{X_\bullet/k}(k)/\Pic_{X_{\bullet}/k}^{0}(k)=\NS(X_\bullet)\rightarrow
	K(k)/K^0(k) \]
	is 
	\[\frac{T(k)\Pic^0_{X_{\bullet}/k}(k)}{\Pic^0_{X_\bullet/k}(k)},\]
	because, if $L$ maps to $M\in K^0(k)$, then there is some
	$L^0\in \Pic^0_{X_\bullet/k}(k)$ also mapping to $M$, and the difference is in
	$T(k)$.

	As $T$ has only finitely many connected components, this kernel is finite.

	The group $K(k)/K^0(k)$ maps to the group of connected components $\NS(X_0)$ of
	$\Pic_{X_0/k}$. The kernel of this map is $(\Pic^0_{X_0/k}(k)\cap K(k))/K^0(k)$,
	which is finite, as $\Pic^0_{X_0/k}\cap K$ has only finitely many connected
	components, and the subgroup $K^0\subset \Pic^0_{X_0/k}$ is the neutral component of
	$\Pic^0_{X_0/k}\cap K$. Hence $K(k)/K^0(k)$ is finitely generated, because
	$\NS(X_0)$ is finitely generated by Theorem \ref{representabilityandfiniteness}. This shows that
	$\NS(X_\bullet)$ is finitely generated.

\end{proof}

Recall that to an $X$-scheme $X_0$ one can associate an $X$-augmented simplicial scheme
$\cosk_0(X_0)_\bullet$, the \emph{$0$-coskeleton}, defined by taking $\cosk_0(X_0)_n$ to be the $n$-fold fiber
product of $X_0$ over $X$, with the necessary maps given by the various projections (resp.
diagonals) to (resp. from) $\cosk_0(X_0)_{n-1}$. The $0$-coskeleton has the following universal
property: If $Y_\bullet$ is a simplicial scheme with augmentation to $X$, then there is a
bifunctorial bijection $\hom_X(Y_0,X_0)\cong\hom_X(Y_\bullet, \cosk_0(X_0)_\bullet)$. In
particular, if $X_\bullet$ is a simplicial scheme, then there is a unique simplicial
$X$-morphism $\gamma:X_\bullet\rightarrow
\cosk_0(X_0)_\bullet$, with $\gamma_0=\id_{X_0}$. For the general construction see, e.g., 
\cite[5.1.1]{HodgeIII}.
\begin{lemma}\label{cosklemma}
	If $\tau:X_\bullet\rightarrow X$ is an augmented simplicial $k$-scheme such that
	$X_n$ is separated and of finite type over $k$ for all $n$, and $\gamma:X_\bullet\rightarrow
	\cosk_0(X_0)_\bullet$ the morphism of augmented simplicial schemes such that
	$\tau_0=\id_{X_0}$, then the kernel of the induced morphism $\Pic(
	(\cosk_0(X_0)_\bullet)\rightarrow \Pic X_\bullet$ is finitely generated.
\end{lemma}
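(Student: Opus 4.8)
The plan is to translate the statement into one about the cohomology of the cosimplicial groups of units, and then to invoke Rosenlicht's theorem on units of varieties.

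Write $A_n:=\mathcal{O}^\times(\cosk_0(X_0)_n)$ and $B_n:=\mathcal{O}^\times(X_n)$. Since $\cosk_0(X_0)_n=X_0\times_X\cdots\times_X X_0$ is separated and of finite type over $k$ (as $X_0$ is and $X$ is separated), these are cosimplicial abelian groups, whose associated (multiplicatively written) complexes $A^\bullet,B^\bullet$ have cohomology giving the bottom rows $E_2^{p,0}$ of the spectral sequences $E_1^{p,q}=H^q(\cosk_0(X_0)_p,\G_m)\Rightarrow\Pic(\cosk_0(X_0)_\bullet)$ and $E_1^{p,q}=H^q(X_p,\G_m)\Rightarrow\Pic(X_\bullet)$, exactly as in the proof of Proposition~\ref{simplicialnsfg}. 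The resulting low-degree exact sequences have first nontrivial term $H^1(A^\bullet)\hookrightarrow\Pic(\cosk_0(X_0)_\bullet)$, respectively $H^1(B^\bullet)\hookrightarrow\Pic(X_\bullet)$, and their edge map to the right is $\Pic(\cosk_0(X_0)_\bullet)\to\ker(\Pic(X_0)\to\Pic(\cosk_0(X_0)_1))$, respectively $\Pic(X_\bullet)\to\ker(\Pic(X_0)\to\Pic(X_1))$. The morphism $\gamma$ induces a map of these exact sequences; since $\gamma_0=\id_{X_0}$, the two face maps $X_1\to X_0$ of $X_\bullet$ are $\mathrm{pr}_0\circ\gamma_1$ and $\mathrm{pr}_1\circ\gamma_1$, so $\ker(\Pic(X_0)\to\Pic(\cosk_0(X_0)_1))$ is carried injectively into $\ker(\Pic(X_0)\to\Pic(X_1))$. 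A short diagram chase then shows that $\ker(\Pic(\cosk_0(X_0)_\bullet)\to\Pic(X_\bullet))$ lies in the subgroup $H^1(A^\bullet)$, and in fact equals $\ker(H^1(A^\bullet)\xrightarrow{\gamma^*}H^1(B^\bullet))$.

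Next I would make this kernel explicit. Because $\gamma_0=\id$, the coboundary $A_0\to A_1$ is taken by $\gamma_1^*$ to the coboundary $B_0\to B_1$, so every class in $\ker(H^1(A^\bullet)\to H^1(B^\bullet))$ has a $1$-cocycle representative $\alpha\in A_1=\mathcal{O}^\times(X_0\times_X X_0)$ with $\gamma_1^*\alpha=1$ in $\mathcal{O}^\times(X_1)$; two such representatives differ only by a coboundary $d^0u$ with $u\in\mathcal{O}^\times(X_0)$ that becomes $1$ on $X_1$. Hence the group we must bound is a subquotient of $\ker(\mathcal{O}^\times(X_0\times_X X_0)\xrightarrow{\gamma_1^*}\mathcal{O}^\times(X_1))$, and it remains only to show that this subquotient is finitely generated.

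The finiteness rests on Rosenlicht's theorem: for a reduced scheme $Y$ separated and of finite type over the algebraically closed field $k$, the group $\mathcal{O}^\times(Y)/k^\times$ is finitely generated. Applying this to $(X_0\times_X X_0)_{\red}$, and using that the constant sub-cosimplicial group is acyclic in positive degrees — so that passing to the quotient by $k^\times$, and likewise by the nilradical, leaves $H^1$ unchanged — reduces matters to the nilpotent part, namely the contribution of $1+\mathrm{Nil}\,\mathcal{O}(X_0\times_X X_0)$. This is the main obstacle, since a priori $1+\mathrm{Nil}\,\mathcal{O}(Y)$ is not finitely generated (it is uniquely divisible when $\Char k=0$ and $p$-power torsion when $\Char k=p$). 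Here one would exploit the two constraints on a representative — the cocycle identity on $X_0\times_X X_0\times_X X_0$ together with $\gamma_1^*\alpha=1$ — to show that a nilpotent $1$-cocycle is already a coboundary of a nilpotent unit on $X_0$, by filtering $\mathrm{Nil}$ by its powers and solving the resulting additive cocycle equations on the coherent graded pieces over $(X_0\times_X X_0)_{\red}$, with the cases $\Char k=0$ and $\Char k=p$ treated separately. Granting this, the subquotient in question injects into a finitely generated group, and therefore $\ker(\Pic(\cosk_0(X_0)_\bullet)\to\Pic(X_\bullet))$ is finitely generated.
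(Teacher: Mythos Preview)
Your argument lands at exactly the same place as the paper's: the kernel embeds in a subquotient of $\ker\bigl(\gamma_1^*:\mathcal{O}^\times(X_0\times_X X_0)\to\mathcal{O}^\times(X_1)\bigr)$, and one finishes by invoking the Rosenlicht--Kahn finiteness of units modulo constants. The difference is packaging. You run the low-degree exact sequence of the spectral sequence and do a diagram chase; the paper instead works directly with the description of $\Pic$ as isomorphism classes of pairs $(L,\alpha)$: an element of the kernel has $L$ trivializable on $X_0$ via some $\beta$, the automorphism $(p_2^*\beta)\alpha(p_1^*\beta)^{-1}$ of $\mathcal{O}_{X_0'}$ lies in $\ker(\gamma_1^*)$, and the ambiguity in the choice of $\beta$ is precisely your $d^0_A(\ker d^0_B)$. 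The paper then notes that $k^\times$ injects under $\gamma_1^*$, so $\ker(\gamma_1^*)\hookrightarrow \mathcal{O}^\times(X_0')/k^\times$, and cites \cite[Lemme~1]{Kahn/GroupeDesClasses}. The direct route is shorter and avoids the spectral-sequence bookkeeping, but yours is equivalent.

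Where the two accounts genuinely diverge is on the nilpotent issue you raise. The paper does not discuss it: it applies the Rosenlicht--Kahn lemma to $X_0'=X_0\times_X X_0$ as written, without noting that $X_0'$ need not be reduced in the stated generality (and for non-reduced $Y$ the group $\mathcal{O}^\times(Y)/k^\times$ is typically not finitely generated, e.g.\ $Y=\Spec k[\epsilon]$). So you are right to flag this. That said, your own resolution is only a sketch: the phrase ``Granting this'' signals that the filtration-by-powers-of-$\mathrm{Nil}$ argument, and the claim that a nilpotent $1$-cocycle killed by $\gamma_1^*$ is a coboundary, are not actually carried out. As written, then, your proof is no more complete than the paper's on this point --- just more candid about where the gap lies.
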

\begin{proof}
        We have the following situation:
	\[\xymatrix{
	\gamma_\bullet:X_\bullet\ar[r] \ar@{}[d]|\vdots&
	\cosk_0(X)_\bullet\ar@{}[d]|\vdots\\
	X_1\ar@<1ex>[d]^{\delta_i}\ar[d] \ar[r]^{\gamma_1}&X_0'
	\ar@<1ex>[d]^{p_i}\ar[d] \\
	X_0\ar@<1ex>[u]\ar@{=}[r]^{\gamma_0=\id}\ar[d]&X_0\ar@<1ex>[u]\ar[d]\\
	X\ar@{=}[r]&X
	}\]
	where $X_0':=X_0\times_X X_0$ and $p_i$ the projection to the $i$-th factor.
	
	If $(L,\alpha:p_1^*L\stackrel{\sim}{\rightarrow}
	p_2^*L)\in \Pic( \cosk_0(X_0)_\bullet)$ pulls back to the trivial element in $\Pic(X_\bullet)$,
	then there is some isomorphism $\beta:L\rightarrow \mathcal{O}_{X_0}$, such that
	the diagram
	\[\xymatrix{
	\delta_0^*L\ar[r]^{\gamma_1^*\alpha}\ar[d]_{\delta_0^*\beta}&
	\delta_1^*L\ar[d]^{\delta_1^*\beta}\\
	\mathcal{O}_{X_1}\ar@{=}[r]^\id&\mathcal{O}_{X_1}}\]
	commutes, and $(p_2^*\beta) \alpha (p_1^*\beta)^{-1}$ is an automorphism of
	$\mathcal{O}_{X_0'}$, pulling back the identity on $X_1$. Hence
	$(p_2^*\beta)\alpha(p_1^*\beta)^{-1}$ is an element of
	\[\ker(\gamma_1^*:\Gamma(X_0',\mathcal{O}_{X_0'}^\times)\longrightarrow
	\Gamma(X_1,\mathcal{O}_{X_1}^\times)).\] 
	Note that $(p_2^*\beta) \alpha (p_1^*\beta)^{-1}=1$ if,
	and only if, $(L,\alpha)$ is already trivial in $\Pic(\cosk_0(X_0)_\bullet)$. 

	Replacing  $\beta$ by
	$\beta \lambda$ for some $\lambda\in
	\ker(\delta_0^*-\delta_1^*:\Gamma(X_0,\mathcal{O}_{X_0}^\times)\rightarrow
	\Gamma(X_1,\mathcal{O}_{X_1}^\times))$ gives a new trivialization
	$\gamma^*(L,\alpha)\cong(\mathcal{O}_{X_0},\id)$, and any trivialization can be
	reached like this (trivializations of the line bundle $L$ are a $\G_m$-torsor, and
	to get a trivialization of the \emph{pair} $\gamma^*(L,\alpha)=(L, \gamma_1^*\alpha)$, the condition that
	$1=(\delta_1^*\lambda)^{-1}(\delta_0^*\lambda)$ is necessary and sufficient). Next, observe that
	$p_1^*-p_2^*$ (or rather $p_1^*/p_2^*$) induces a map
	\[\ker(\Gamma(X_0,\mathcal{O}_{X_0}^\times)\stackrel{\delta_0^*-\delta_1^*}{\longrightarrow}
	\Gamma(X_1,\mathcal{O}_{X_1}^\times))\rightarrow\ker(\Gamma(X_0',\mathcal{O}_{X_0'}^\times)\stackrel{\gamma_1^*}{\longrightarrow}\Gamma(X_1,\mathcal{O}_{X_1}^\times)).\]
	Putting all of this
	together, we see that we obtain  an injective map
	\begin{multline*}
		\ker\left(\Pic( (\cosk_0 X_0\right)_\bullet)\rightarrow
	\Pic(X_\bullet))\\\longrightarrow
	\frac{\ker(\Gamma(X_0',\mathcal{O}_{X_0'}^\times)\stackrel{\gamma_1^*}{\longrightarrow}\Gamma(X_1,\mathcal{O}_{X_1}^\times))}{(p_1-p_2)^*(\ker(\Gamma(X_0,\mathcal{O}_{X_0}^\times)\stackrel{\delta_0^*-\delta_1^*}{\longrightarrow}
	\Gamma(X_1,\mathcal{O}_{X_1}^\times))}.
	\end{multline*}
	This implies
	that $\ker(\Pic(\cosk_0(X_0)_\bullet)\rightarrow \Pic(X_\bullet))$ is finitely
	generated. In fact, pulling back units from $k^\times$ by $\gamma_1$ is
	injective, as $k\rightarrow \Gamma(X_1,\mathcal{O}_{X_1})$ is injective. Thus
	$\ker(\Gamma(X_0',\mathcal{O}_{X_0'}^\times)\stackrel{\gamma_1^*}{\longrightarrow}\Gamma(X_1,\mathcal{O}_{X_1}^\times))\hookrightarrow
	\Gamma(X_0',\mathcal{O}_{X_0'}^\times)/k^\times$, which is a finitely generated
	abelian group. To see this we use the
	separatedness of $X_0$ to ensure the existence of a Nagata compactification of
	$X'_0$, so that we can apply, e.g., \cite[Lemme 1]{Kahn/GroupeDesClasses}.
\end{proof}

\begin{proposition}\label{ufg} Let $U$ be a regular, connected $k$-scheme of finite type, and
	$\tau: U_\bullet\rightarrow U$ a smooth, proper
	hypercovering such that $\tau_0:U_0\rightarrow U$ is an alteration (i.e.~proper,
	surjective and \emph{generically finite}) and $U_0$ is connected. Then the kernel
	of $\tau^*:\Pic U \rightarrow \Pic U_\bullet$ is finitely generated. In
	particular, $\Pic U_\bullet$ is finitely
	generated, then so is $\Pic U$.
\end{proposition}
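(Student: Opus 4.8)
The plan is to factor the pullback map $\tau^{*}$ through the $0$-coskeleton and invoke Lemma~\ref{cosklemma}. By the universal property of $\cosk_{0}$ recalled above, there is a unique morphism of $U$-augmented simplicial schemes $\gamma:U_{\bullet}\to\cosk_{0}(U_{0})_{\bullet}$ with $\gamma_{0}=\id_{U_{0}}$, and $\tau$ is the composite of $\gamma$ with the augmentation $\cosk_{0}(U_{0})_{\bullet}\to U$. Hence $\tau^{*}:\Pic U\to\Pic U_{\bullet}$ factors as $\Pic U\xrightarrow{\ \iota\ }\Pic(\cosk_{0}(U_{0})_{\bullet})\xrightarrow{\ \gamma^{*}\ }\Pic U_{\bullet}$, where $\iota$ sends $L$ to the pair $(\tau_{0}^{*}L,\mathrm{can}_{L})$ with $\mathrm{can}_{L}$ the canonical descent datum. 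Since $\ker(\gamma^{*})$ is finitely generated by Lemma~\ref{cosklemma}, it is enough to prove that $\iota$ is injective: then $\iota$ identifies $\ker(\tau^{*})$ with a subgroup of $\ker(\gamma^{*})$, so $\ker(\tau^{*})$ is finitely generated, and if moreover $\Pic U_{\bullet}$ is finitely generated then $\Pic U$ is an extension of the subgroup $\image(\tau^{*})\subseteq\Pic U_{\bullet}$ by $\ker(\tau^{*})$, hence finitely generated as well.

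So suppose $L\in\Pic U$ with $\iota(L)=0$. Chasing the identifications in Definition~\ref{definition:simplicialpic}, this amounts to a nowhere-vanishing section $t$ of $\tau_{0}^{*}L$ on $U_{0}$ with $p_{1}^{*}t=p_{2}^{*}t$ as sections of $q^{*}L$ on $U_{0}\times_{U}U_{0}$, where $p_{1},p_{2}$ are the two projections and $q=\tau_{0}\circ p_{1}=\tau_{0}\circ p_{2}$. Because $U_{0}$ is smooth over $k$ and connected it is integral, and since $\tau_{0}$ is an alteration $K(U_{0})/K(U)$ is a finite field extension. Localizing at the generic point $\eta$ of $U$ and using faithfully flat descent along the field extension $K(U)\to K(U_{0})$, the equalizer of $p_{1}^{*}$ and $p_{2}^{*}$ on $L_{\eta}\otimes_{K(U)}K(U_{0})$ is $L_{\eta}$; thus $t$ agrees at $\eta$ with $\tau_{0}^{*}\tilde t$ for a rational section $\tilde t$ of $L$, and since $\tau_{0}$ is dominant we get $\tau_{0}^{*}\tilde t=t$, whence $\tau_{0}^{*}\operatorname{div}(\tilde t)=\operatorname{div}(t)=0$ on $U_{0}$.

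It then remains to deduce $\operatorname{div}(\tilde t)=0$ on $U$, for then $\tilde t$ is a nowhere-vanishing regular section of $L$ and $[L]=0$. This is the one step that really uses the geometry of the alteration: for a codimension-one point $z$ of $U$, properness and surjectivity of $\tau_{0}$ together with $\dim U_{0}=\dim U$ produce a codimension-one point $w$ of $U_{0}$ over $z$; regularity of $U$ and $U_{0}$ makes $\mathcal{O}_{U,z}\to\mathcal{O}_{U_{0},w}$ an extension of discrete valuation rings with some ramification index $e\geq 1$, so $\operatorname{ord}_{w}(\tau_{0}^{*}D)=e\cdot\operatorname{ord}_{z}(D)$ for any Cartier divisor $D$ on $U$, and hence $\tau_{0}^{*}D=0$ forces $D=0$. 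I expect this divisor-pullback lemma, and the bookkeeping needed to extract the section $t$ with $p_{1}^{*}t=p_{2}^{*}t$ from the vanishing of $\iota(L)$, to be the only points requiring care; everything else is formal.
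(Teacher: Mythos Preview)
Your argument is correct and follows the same overall plan as the paper: factor $\tau^{*}$ as $\Pic U\xrightarrow{\iota}\Pic(\cosk_{0}(U_{0})_{\bullet})\xrightarrow{\gamma^{*}}\Pic U_{\bullet}$, invoke Lemma~\ref{cosklemma} for $\ker(\gamma^{*})$, and show that $\iota$ is injective. The difference lies entirely in how injectivity of $\iota$ is established. The paper observes that the non-flat locus of $\tau_{0}$ has codimension~$\geq 2$ in $U$ (since over any codimension-one point the local ring of $U$ is a DVR and the map is injective, hence torsion-free, hence flat), so after restricting to the flat locus $V$ one has $\Pic U=\Pic V$ and a genuine fppf cover $V_{0}\to V$; then a line bundle on $V$ with trivial descent datum on $V_{0}$ is trivial by ordinary faithfully flat descent. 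Your route instead descends only at the generic point to produce a rational section $\tilde t$, and then checks divisor-by-divisor that $\operatorname{div}(\tilde t)=0$ by exhibiting, for each codimension-one $z\in U$, a codimension-one $w\in U_{0}$ above it and using the ramification formula. Both arguments exploit the same underlying fact (regularity of $U$ forces good behaviour in codimension one); the paper's version packages this as ``flat outside codimension~$\geq 2$'' and then appeals once to descent, whereas yours unpacks it into an explicit divisor computation. The paper's argument is slightly shorter and avoids the bookkeeping with rational sections, but your version has the virtue of making the role of the alteration hypothesis (equal dimensions, properness) completely transparent.
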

\begin{proof}
	If
	$V\subset U$ is the biggest open subset of $U$ such that $\tau_0$ restricted to $V_0:=\tau_0^{-1}(V)$ is
	flat, then  $V\neq \emptyset$, and the complement $U\setminus V$ has codimension $\geq 2$. In fact, as
	$U_0\rightarrow U$ is surjective, for any $\eta$ mapping to a codimension $1$
	point $\xi\in U$, the morphism
	$\mathcal{O}_{U,\xi}\rightarrow \mathcal{O}_{U_0,\eta}$ is injective, so
	$\mathcal{O}_{U_0,\eta}$ is a torsion free $\mathcal{O}_{U,\xi}$-module. But as $U$
	is regular, $\mathcal{O}_{U,\xi}$ is a discrete
	valuation ring, so $\tau_0$ is flat at $\eta$, and $\xi\in V$.
	Thus $\Pic(U)=\Pic(V)$, and $\tau_0|_{V_0}$ is faithfully flat.  
	
	Giving an element of $\Pic( \cosk_0(U_0)_\bullet)$ is the same thing as giving an
	(isomorphism class of) a pair $(L,\alpha)$ with $L$ a line bundle on $U_0$ and
	$\alpha$ a descent datum of $L$ relative to $U$.

	Finally, we see that if a line bundle
	$L$ on $U$ pulls back to the trivial descent datum, then restricting it to $V_0$ and using
	faithful flatness shows that $L|_{V}$ is trivial, so $L$ is trivial, as
	$U\setminus V$ has codimension $\geq 2$. Hence $\Pic
	U\rightarrow \Pic (\cosk_0(U_0)_\bullet)$ is injective, and by Lemma
	\ref{cosklemma} this implies that the kernel of  $\tau^*:\Pic U\rightarrow \Pic
	U_{\bullet}$ is finitely generated.
\end{proof}

\begin{proposition}\label{surjection} Let $j:U_\bullet \rightarrow X_\bullet$ be a morphism of $k$-simplicial
	schemes, such that 
	\begin{enumerate}
		\item $X_p$ is regular and proper over $k$ for every $p$,
		\item $j_p:U_p\hookrightarrow X_p$ is an open immersion with dense image,
		\item \label{assumption3}the face maps $X_{i+1}\rightarrow X_i$ map $X_{i+1}\setminus
			U_{i+1}$ to $X_i\setminus U_i$.
	\end{enumerate}
	Then the cokernel of the induced map $j^*:\Pic X_\bullet\rightarrow \Pic U_\bullet$ is finitely generated.
\end{proposition}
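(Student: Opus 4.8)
The plan is to compare, through $j$, the two spectral sequences
$$E_1^{p,q}(X_\bullet)=H^q(X_p,\mathcal O_{X_p}^\times)\Longrightarrow \Pic(X_\bullet),\qquad E_1^{p,q}(U_\bullet)=H^q(U_p,\mathcal O_{U_p}^\times)\Longrightarrow \Pic(U_\bullet)$$
of \cite[(5.2.3.2)]{HodgeIII}, and to extract finiteness of $\coker(j^*)$ from finiteness on the $E_2$-page, just as in the proof of Proposition \ref{simplicialnsfg}. First I would record the nonsimplicial input, level by level. Write $D_p:=X_p\setminus U_p$ with its reduced structure. Since $X_p$ is regular, $\Pic X_p\to\Pic U_p$ is surjective and its kernel $B_p$ is the image of the finitely generated free group of Weil divisors supported on $D_p$; and the divisor map embeds $Q_p:=\Gamma(U_p,\mathcal O_{U_p}^\times)/\Gamma(X_p,\mathcal O_{X_p}^\times)$ into that same group. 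So $B_p$ and $Q_p$ are finitely generated.

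The crucial step is to organize $B_\bullet$ and $Q_\bullet$ into cosimplicial abelian groups. This is where hypothesis (\ref{assumption3}) is used: together with the fact that $j$ is a morphism of simplicial schemes it forces $\delta^{-1}(U_p)=U_{p+1}$ for every face map $\delta\colon X_{p+1}\to X_p$ (for degeneracies $s\colon X_{p}\to X_{p-1}$ only $s(U_p)\subseteq U_{p-1}$ is needed, and this holds automatically). Consequently pullback of line bundles and of units along the structure maps carries ``trivial, resp.\ a unit, on the boundary complement'' to the same property, so the termwise inclusions $\Gamma(X_\bullet,\mathcal O^\times)\hookrightarrow\Gamma(U_\bullet,\mathcal O^\times)$ and $(p\mapsto\Pic X_p)\to(p\mapsto\Pic U_p)$ sit in short exact sequences of cosimplicial abelian groups with cokernels $Q_\bullet$ and $B_\bullet$. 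These are exactly the rows $q=0$ and $q=1$ of the morphism of $E_1$-pages above. Passing to cohomology, the resulting long exact sequences show that the kernels and cokernels of $E_2^{p,0}(X_\bullet)\to E_2^{p,0}(U_\bullet)$ (resp.\ of $E_2^{p,1}(X_\bullet)\to E_2^{p,1}(U_\bullet)$) are subquotients of the cohomology of $Q_\bullet$ (resp.\ of $B_\bullet$); since these cosimplicial groups have finitely generated terms, that cohomology is finitely generated, hence so are all those kernels and cokernels.

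To conclude I would invoke the standard fact that a morphism of first-quadrant cohomological spectral sequences all of whose kernels and cokernels on the $E_2$-page are finitely generated has finitely generated kernel and cokernel on each abutment. In total degree $1$ only the bidegrees $(1,0)$, $(0,1)$ and $(2,0)$ are involved --- via the exact sequence $0\to E_2^{1,0}\to\Pic(X_\bullet)\to\ker(d_2\colon E_2^{0,1}\to E_2^{2,0})\to 0$ and its analogue for $U_\bullet$ --- and all three lie in the two rows just treated, so $\coker(j^*)$ is finitely generated.

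The main obstacle is the middle step: arranging the hypotheses so that the finitely generated ``boundary corrections'' $B_\bullet,Q_\bullet$ are honest cosimplicial objects. Condition (\ref{assumption3}) is precisely what guarantees that the face maps send boundary divisors to boundary divisors and boundary units to boundary units; without it this breaks down. After that the argument is soft, the only mild subtlety being that, because of the differential $d_2$, the graded piece $E_\infty^{0,1}$ is in general a proper subgroup of $\ker(\Pic X_0\to\Pic X_1)$, so one cannot naively compare these two kernels and must either use the comparison lemma or carry the $(2,0)$-entry along in the diagram chase.
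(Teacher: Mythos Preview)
Your argument is correct and follows essentially the same route as the paper's: both compare the two spectral sequences via the low-degree exact sequence $0\to E_2^{1,0}\to\Pic\to\ker d_2\to 0$ and reduce the question to finite generation statements at bidegrees $(1,0)$, $(0,1)$, $(2,0)$. The only difference is packaging. The paper proves directly that the groups $H^i(K_X)$ and $H^i(K_U)$ are themselves finitely generated (via the $k^\times$-trick and \cite{Kahn/GroupeDesClasses}) and then constructs by hand an embedding of $\coker(\phi_0)$ into $\mathbb{Y}_1/(\delta_0^*-\delta_1^*)\mathbb{Y}_0$; you instead organise the boundary data into cosimplicial abelian groups $Q_\bullet,B_\bullet$ with finitely generated terms, so that the long exact cohomology sequences give the needed finite generation of all relevant kernels and cokernels at once. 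Your packaging is a bit more streamlined and avoids the case distinction between even and odd $i$, while the paper's hands-on construction makes the bound $\coker(\phi_0)\hookrightarrow \mathbb{Y}_1/\bar\delta_0^*\mathbb{Y}_0$ explicit.
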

\begin{proof}
	Let $K_X^i:=H^0(X_i,\mathcal{O}_{X_i}^\times)$ and make it into a complex of abelian
	groups
	$(K_X,\bar{\delta})$ via $\bar{\delta}_i:=\sum_{\ell=0}^{i+1} (-1)^\ell\delta_\ell^*: K_X^i\rightarrow K_X^{i+1}$.
	Define $K_U$ in the analogous fashion (where, to simplify notation, we write
	$\delta_i$ for the faces of $X_\bullet$ and for the faces of $U_\bullet$).
	Note that the complexes $K_X$ and $K_U$ have finitely generated cohomology groups:
	For even $i>0$ we
	have $k^\times\subset \image(\bar{\delta}_{i-1})$, so
	$H^i(K_X)=\ker(\bar{\delta}_i)/\image(\bar{\delta}_{i-1})$ is a
	subquotient of $\Gamma(X_i,\mathcal{O}_{X_i}^\times)/k^\times$, which is finitely
	generated, see e.g. \cite[Lemme 1]{Kahn/GroupeDesClasses}. The same argument holds for $K_U$.
	For odd $i$, we have $k^\times\cap \ker(\bar{\delta}_i)=1$, so
	$\ker(\bar{\delta}_i)\hookrightarrow
	\Gamma(X_i,\mathcal{O}_{X_i}^\times)/k^\times$, and hence $H^i(K_X)$ and
	$H^i(K_U)$ are finitely generated as well. 

	The morphism $j$ induces a morphism of spectral sequences
	\[\xymatrix{E_{1,X}^{p,q}=H^q(X_p,\mathcal{O}_{X_p}^\times)\ar@{=>}[r]\ar[d]&
	\mathbb{H}^{p+q}(X_\bullet,\mathcal{O}_{X_\bullet}^\times)\ar[d]\\
	E^{p,q}_{1,U}=H^q(U_p,\mathcal{O}_{U_p}^\times)\ar@{=>}[r]&
	\mathbb{H}^{p+q}(U_\bullet,\mathcal{O}_{U_\bullet}^\times),}\]
	(note that $K_X=E_{1,X}^{\cdot,0}$, and similarly for $K_U$) from which we obtain the morphism of short exact sequences
	\[\xymatrix{
	0\ar[r]& E_{2,X}^{1,0}=H^1(K_X)\ar[r]\ar[d]&\Pic(X_\bullet)\ar[d]^{j^*}\ar[r]&\ker(d_{2,X})\ar[d]\ar[r]&0\\
	0\ar[r]& E_{2,U}^{1,0}=H^1(K_U)\ar[r]&\Pic(U_\bullet)\ar[r]&\ker(d_{2,U})\ar[r]&0,\\
	}\]
	where $d_{2,X}$ is the differential 
	\[E_{2,X}^{0,1}=\ker(\Pic X_0\stackrel{\delta_0^*-\delta_1^*}{\longrightarrow} \Pic X_1)\longrightarrow
	H^2(K_X)=E_{2,X}^{0,2},\]
	and similarly for $d_{2,U}$. Since $H^1(K_U)$ is finitely generated, $\coker(H^1(K_X)\rightarrow
	H^1(K_U))$ is also finitely generated, so to finish the proof of the proposition, it
	remains to show that $\coker(\ker(d_{2,X})\rightarrow \ker(d_{2,U}))$ is finitely
	generated.

	Consider the diagram
	\[\xymatrix{
	0\ar[r]&\ker(d_{2,X})\ar[r]\ar[d]& \ker(\Pic
	X_0\stackrel{\delta_0^*-\delta_1^*}{\longrightarrow} \Pic
	X_1)\ar[r]^>>>>{d_{2,X}}\ar[d]_{\phi_0}& \image(d_{2,X})\ar[r]\ar[d]&0\\
	0\ar[r]&\ker(d_{2,U})\ar[r]&\ker(\Pic
	U_0\stackrel{\delta_0^*-\delta_1^*}{\longrightarrow}\Pic
	U_1)\ar[r]^>>>>{d_{2,U}}&\image(d_{2,U})\ar[r]&0.}\]
	As $\image(d_{2,X})\subset H^2(K_X)$, we know that $\ker(\image(d_{2,X})\rightarrow
	\image(d_{2,U}))$ is finitely generated, so by the Snake Lemma, to finish the proof it suffices to
	show that the middle vertical map $\phi_0:E_{2,X}^{0,1}\rightarrow E_{2,U}^{0,1}$,
	$\phi_0(L)=L|_{U_0}$ from above has a finitely generated cokernel.
	
	By our regularity assumptions, we have for each $i$ an exact sequence
	\[0\longrightarrow \mathbb{Y}_{i}\longrightarrow \Pic X_i \longrightarrow \Pic
	U_i\longrightarrow 0,\]
	where $\mathbb{Y}_i$ is the subgroup of $\Pic X_i$ generated by the classes of the
	(finite number of) codimension $1$ points of $X_i\setminus U_i$. In particular, this induces a map
	\[\ker( \bar{\delta}_i^*:\Pic U_i\rightarrow\Pic U_{i+1})\longrightarrow
	\mathbb{Y}_{i+1}/\bar{\delta}_i^*
	\mathbb{Y}_i,\]
	where $\bar{\delta}_i^*=\sum_{\ell=0}^{i+1}(-1)^{\ell}\delta_\ell^*$.
	Indeed, we may extend $L\in \ker(\bar{\delta}_i^*:\Pic U_i\rightarrow \Pic U_{i+1})$ to some
	$\tilde{L}\in \Pic X_i$, and map it to
	$\Pic X_{i+1}$, where it has support contained in $X_{i+1}\setminus
	U_{i+1}$, i.e.~it is mapped to $\mathbb{Y}_{i+1}$. To get a well-defined map on
	$\Pic U_i$, we
	have to account for the choice of the extension of $L$ to $X_i$, that is we have to divide out by the
	image of $\mathbb{Y}_{i}$ under $\bar{\delta}_i^*$ which is contained in
	$\mathbb{Y}_{i+1}$ by assumption \ref{assumption3}.

	Next, we show that the kernel of this map is precisely the image of the restriction
	\[\phi_i:\ker(\Pic X_i\rightarrow \Pic X_{i+1})\longrightarrow\ker(\Pic
	U_i\rightarrow \Pic U_{i+1}).\] 
	If
	$L\in \ker(\Pic U_i\rightarrow \Pic U_{i+1})$ maps to $\bar{\delta}_i^*M$, for some
	$M\in \mathbb{Y}_i$, then there is some extension $\tilde{L}$ of $L$ to $X_i$,
	such that $\bar{\delta}_i^*(\tilde{L}\otimes M^{-1})\cong \mathcal{O}_{X_{i+1}}$, so
	$\tilde{L}\otimes M^{-1}\in \ker(\Pic X_i\rightarrow \Pic X_{i+1})$. This shows
	$L\cong\phi_i(\tilde{L}\otimes M^{-1})$, as $M$ is supported on $X_i\setminus U_i$.
	Conversely, if some $L\in \ker(\Pic U_i\rightarrow \Pic U_{i+1})$ can be extended
	to $\tilde{L}\in \ker(\Pic X_i\rightarrow \Pic X_{i+1})$, then by definition $L$
	maps to $0$ in $\mathbb{Y}_{i+1}/\bar{\delta_i}^*\mathbb{Y}_{i}$.

	This finishes the proof: Specializing the last calculation to $i=1$, we see that
	$\coker(\phi_0)$ can be embedded into the finitely generated group
	$\mathbb{Y}_1/(\delta_0^*-\delta_1^*)\mathbb{Y}_0$.
\end{proof}

\section{The Picard Group of Simply Connected Varieties}\label{section:simplyconnectedvarieties}
By a simply connected scheme we mean an irreducible scheme $X$ such that
$\pi_1^{\et}(X,\bar{x})=1$
for some (or any) geometric point $\bar{x}$ of $X$. Often we will suppress notation of base
points and write $\pi_1$ for $\pi_1^{\et}$. If $X$ is a $k$-scheme, for some field $k$,
then $k$ is necessarily algebraically closed. We will mostly be interested in the case
$\Char(k)=p>0$.

\begin{proposition}\label{simplicialpicfg}
	If $X$ is a normal, proper, connected $k$-scheme of finite type, such that
	$\pi_1(X)^{\ab,(\ell)}=1$ for some $\ell\neq p$, and
	$X_\bullet\rightarrow X$ a proper hypercovering with $X_0$ normal, and $X_n$
	reduced for all $n$, then
	$\NS(X_\bullet)=\Pic(X_\bullet)$. In particular, $\Pic(X_\bullet)$ is finitely
	generated.
\end{proposition}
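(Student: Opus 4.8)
The plan is to deduce the statement from the vanishing of $\Pic^{0}_{X_{\bullet}/k}(k)$. Since $X_{0}$ is a nonempty $k$-scheme of finite type over the algebraically closed field $k$, it has a $k$-rational point, so $\Pic(X_{\bullet})=\Pic_{X_{\bullet}/k}(k)$, and the asserted equality $\NS(X_{\bullet})=\Pic(X_{\bullet})$ is equivalent to $\Pic^{0}_{X_{\bullet}/k}(k)=0$; finite generation then follows from Proposition \ref{simplicialnsfg}, whose hypotheses are met ($X_{\bullet}$ proper and reduced, $X_{0}$ normal, each $X_{n}$ of finite type). By Theorem \ref{representabilityandfiniteness}(5) the group $G:=\Pic^{0,\red}_{X_{\bullet}/k}$ is a semi-abelian variety and $G(k)=\Pic^{0}_{X_{\bullet}/k}(k)$ because $k$ is reduced. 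If $G\neq 0$ then $\dim G\geq 1$; for a prime $\ell\neq p$ the map $[\ell^{n}]\colon G\to G$ is then an isogeny with finite nontrivial kernel (of order $\ell^{n(t+2g)}$, $t$ the torus rank and $g$ the dimension of the abelian quotient), so $G(k)$ has nontrivial $\ell^{n}$-torsion. Hence it suffices to show that $\Pic(X_{\bullet})$ is $\ell^{n}$-torsion free for all $n$.

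First I would apply the long exact hypercohomology sequence associated to the Kummer sequence $1\to\mu_{\ell^{n}}\to\G_{m}\xrightarrow{\ell^{n}}\G_{m}\to 1$ of sheaves on the \'etale site of $X_{\bullet}$ (exact because $\ell\neq p$). Using the identification $\Pic(X_{\bullet})\cong\mathbb{H}^{1}(X_{\bullet},\mathcal{O}^{\times}_{X_{\bullet}})$ recalled after Definition \ref{definition:simplicialpic}, this gives a surjection $\mathbb{H}^{1}_{\et}(X_{\bullet},\mu_{\ell^{n}})\to\Pic(X_{\bullet})[\ell^{n}]$. As $k$ is algebraically closed and $\ell\neq p$, a primitive $\ell^{n}$-th root of unity identifies $\mu_{\ell^{n}}$ with the constant sheaf $\Z/\ell^{n}\Z$, so it remains to prove $\mathbb{H}^{1}_{\et}(X_{\bullet},\Z/\ell^{n}\Z)=0$.

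For this I would invoke cohomological descent: since $\tau\colon X_{\bullet}\to X$ is a proper hypercovering, $\mathbb{H}^{i}_{\et}(X_{\bullet},\tau^{*}\mathcal{F})\cong H^{i}_{\et}(X,\mathcal{F})$ for every torsion abelian sheaf $\mathcal{F}$ on $X$ (see e.g.~\cite{HodgeIII}). Taking $\mathcal{F}=\Z/\ell^{n}\Z$ and using that $X$ is connected, $\mathbb{H}^{1}_{\et}(X_{\bullet},\Z/\ell^{n}\Z)\cong H^{1}_{\et}(X,\Z/\ell^{n}\Z)=\Hom_{\cont}(\pi_{1}(X,\bar{x}),\Z/\ell^{n}\Z)$; since $\Z/\ell^{n}\Z$ is a finite abelian $\ell$-group, every such homomorphism factors through $\pi_{1}(X)^{\ab,(\ell)}$, which vanishes by hypothesis. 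Thus $\mathbb{H}^{1}_{\et}(X_{\bullet},\Z/\ell^{n}\Z)=0$, so $\Pic(X_{\bullet})$ is $\ell^{n}$-torsion free, forcing $G=0$; hence $\Pic(X_{\bullet})=\NS(X_{\bullet})$ is finitely generated.

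The one genuinely non-formal ingredient --- and the step I expect to need the most care --- is cohomological descent with torsion coefficients for the proper (but possibly singular) hypercovering $X_{\bullet}\to X$. The remaining points are routine but should be verified: that the Kummer long exact sequence is compatible with the explicit description of $\Pic(X_{\bullet})$ in Definition \ref{definition:simplicialpic}, and that $\Pic(X_{\bullet})$ indeed agrees with $\Pic_{X_{\bullet}/k}(k)$ so that the semi-abelian structure from Theorem \ref{representabilityandfiniteness}(5) applies.
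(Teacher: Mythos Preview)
Your proof is correct and follows essentially the same route as the paper: cohomological descent for the proper hypercovering identifies $\mathbb{H}^1_{\et}(X_\bullet,\mu_{\ell^n})$ with $H^1_{\et}(X,\mu_{\ell^n})=0$, the Kummer sequence then kills the $\ell$-torsion in $\Pic(X_\bullet)$, and the semi-abelian structure of $\Pic^{0,\red}_{X_\bullet/k}$ forces it to vanish, whence $\Pic(X_\bullet)=\NS(X_\bullet)$ is finitely generated by Proposition~\ref{simplicialnsfg}. The paper cites \cite[Lemma 5.1.3]{Srinivas/Motives} for the descent step you flag as the delicate point and is otherwise terser, but there is no substantive difference in strategy.
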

\begin{proof}
	This is a consequence of cohomological descent: There is an isomorphism $0=H^1_{\et}(X,\mu_n)\cong \mathbb{H}^1(X_\bullet,
	\mu_{n,X_\bullet})$ (see e.g. \cite[Lemma 5.1.3]{Srinivas/Motives}), so
	$\Pic^0_{X_\bullet/k}(k)=\Pic^{0,\red}_{X_\bullet/k}(k)$ has no $\ell$-torsion, and thus is trivial, as $\Pic^{0,\red}_{X_\bullet/k}$ is semi-abelian by
	Theorem \ref{representabilityandfiniteness}. In fact, if  a semi-abelian variety has no
	$\ell$-torsion, then it is an abelian variety, as a nontrivial subtorus would have
	nontrivial $\ell$-torsion. But an abelian variety with trivial $\ell$-torsion is
	trivial. Hence
	$\NS(X_\bullet)=\Pic(X_\bullet)=\Pic_{X_\bullet/k}(k)$. This group is finitely
	generated by Proposition \ref{simplicialnsfg}.
\end{proof}

We are ready to prove the first main theorem.
\begin{proof}[{Proof of Theorem \ref{picfg}}.]
	By Nagata's theorem there exists a proper variety $X$
	admitting $U$ as a dense open subscheme, and since $U$ is normal we may assume $X$ to be
	normal.  By \cite{deJong/Alterations} there exists an augmented proper hypercovering
	$X_\bullet\rightarrow X$ with $X_n$ regular and proper over $k$, such that the part
	$Z_n$ of $X_n$ lying over
	$X\setminus U$ is a strict normal crossings divisor. Write $U_n:=X_n\setminus
	Z_n$. As $U$ is connected we can
	pick $X_\bullet$ such that $X_0$ and $U_0$ are connected. Also note that
	$\pi_1(U)^{\ab,(\ell)}$ surjects onto $\pi_1(X)^{\ab,(\ell)}$ (see, e.g.,  \cite[Prop.
	V.6.9]{SGA1}), so $\pi_1(X)^{\ab,(\ell)}=1$.  We have shown that $\Pic
	X_\bullet$ is finitely generated (Proposition \ref{simplicialpicfg}), that $\Pic
	X_\bullet$ maps to $\Pic U_\bullet$ with finitely generated cokernel (Proposition
	\ref{surjection}) and that this implies that $\Pic U$ is finitely generated
	(Proposition
	\ref{ufg}).
\end{proof}

\section{Stratified Line Bundles on Regular Simply Connected
Varieties}\label{section:stratified}
We continue to denote by $k$ an algebraically closed field of characteristic
$p\geq 0$.

Let $U$ be a $k$-scheme. Grothendieck defined in \cite[\S 16]{EGA4} the sheaf
$\mathcal{D}_{U/k}$ of differential operators of $U$ over $k$. In characteristic $0$,
$\mathcal{D}_{U/k}$ is (locally) the enveloping algebra of $\mathcal{O}_U$ and
the sheaf of derivations $\Der_{U/k}(\mathcal{O}_U,\mathcal{O}_U)$. This is false in positive characteristic. For
details, see e.g. \cite[Ch. 2]{BerthelotOgus/Crystalline}. 
\begin{definition}
	A $\mathcal{O}_U$-coherent module $E$ is called \emph{stratified bundle}, if $E$
	has a $\mathcal{D}_U$-action, compatible with its $\mathcal{O}_U$-structure. A
	\emph{horizontal morphism of stratified bundles} $E\rightarrow E'$ is a morphism of
	$\mathcal{O}_X$-modules, which is also a morphism of $\mathcal{D}_U$-modules. A
	stratified bundle is trivial, if it is isomorphic to $\mathcal{O}_U^n$ for some $n$, and if the $\mathcal{D}_U$-action is the sum of the canonical actions on
	$\mathcal{O}_{U}$.
\end{definition}
Note that if $U$ is regular, then a stratified bundle is automatically locally free, \cite[2.17]{BerthelotOgus/Crystalline}, which
justifies the name.

In positive characteristic, Katz gives a nice
description of stratified bundles.

\begin{theorem}[{Katz, \cite[Thm. 1.3]{Gieseker/FlatBundles}}]
	Let $k$ be an algebraically closed field of positive characteristic $p$. If $U$ is a regular, finite type $k$-scheme, then the category of stratified bundles on $U$ is equivalent to the category of
	sequences of pairs $(E_n, \sigma_n)_{n\in \N}$, where $E_n$ is a locally free
	sheaf of finite rank on $U$, and $\sigma_n$ an isomorphism $F^*E_{n+1}\rightarrow
	E_n$, with $F:X\rightarrow X$ the absolute Frobenius. 

	A morphism
	$(E_n,\sigma_n)\rightarrow (E'_n,\sigma'_n)$ in the latter category is given by a
	sequence
	morphisms $\phi_n:E_n\rightarrow E'_n$ compatible with the $\sigma_n,\sigma'_n$.

	A trivial stratified bundle is corresponds to a sequence of pairs
	$(\mathcal{O}_U,\id_{\mathcal{O}_U})_n$.
\end{theorem}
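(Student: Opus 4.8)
The plan is to run the classical argument of Katz (see also Gieseker and Haastert). Since $k$ is algebraically closed, hence perfect, a regular finite-type $k$-scheme $U$ is smooth over $k$; working affine-locally, write $U=\Spec A$ with $A$ smooth over $k$. The first and decisive input is the structure of the ring $D_{A/k}$ of $k$-linear differential operators in characteristic $p$: since $k$ is perfect, the subring $A^{p^m}=\{a^{p^m}:a\in A\}$ contains $k$, the map $F^m\colon A\to A^{p^m}$ is an isomorphism, $A$ is a faithfully flat, finite, locally free $A^{p^m}$-module, and
\[ D_{A/k}\;=\;\bigcup_{m\ge 0}\mathrm{End}_{A^{p^m}}(A), \]
an increasing union of subrings. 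I would recall this (e.g.\ from Berthelot--Ogus, Ch.\ 2) via the local description of $D_{A/k}$ in terms of Hasse--Schmidt derivatives. Consequently, giving a $D_{A/k}$-action on a coherent $A$-module $M$ compatible with the $A$-structure is the same as giving, for every $m$, an $\mathrm{End}_{A^{p^m}}(A)$-module structure on $M$ extending its $A$-structure, in a way compatible with the inclusions $\mathrm{End}_{A^{p^m}}(A)\subseteq\mathrm{End}_{A^{p^{m+1}}}(A)$; and these local descriptions glue to a statement about $\mathcal{O}_U$-modules with $\mathcal{D}_{U/k}$-action.

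The second step is Cartier descent in its Morita form. Because $A$ is a faithfully flat, finite, locally free --- hence progenerator --- $A^{p^m}$-algebra, the functor $E\mapsto A\otimes_{A^{p^m}}E$ is an equivalence from $A^{p^m}$-modules to $A$-modules equipped with an $\mathrm{End}_{A^{p^m}}(A)$-action extending the $A$-action. Globalizing, and using perfectness of $k$ to identify the Frobenius twist $U^{(m)}$ with $U$ and the relative with the absolute Frobenius $F\colon U\to U$, this reads: an $\mathcal{O}_U$-module $M$ with a level-$m$ action is the same as a coherent sheaf $E_m$ on $U$ together with an isomorphism $M\cong F^{m*}E_m$. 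Faithfully flat descent of local freeness shows $E_m$ is locally free of finite rank if and only if $M$ is; and $M$ is indeed locally free, because a stratified bundle on the regular scheme $U$ is automatically locally free (as already cited).

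Third, I compare consecutive levels. The level-$m$ datum being induced by the level-$(m+1)$ one means that, writing $M\cong F^{(m+1)*}E_{m+1}=F^{m*}(F^*E_{m+1})$ and comparing with $M\cong F^{m*}E_m$, the full faithfulness of $F^{m*}$ (uniqueness of faithfully flat descent) yields a canonical isomorphism $\sigma_m\colon F^*E_{m+1}\stackrel{\sim}{\longrightarrow}E_m$. This defines a functor from stratified bundles to sequences $(E_n,\sigma_n)$. For a quasi-inverse, a sequence $(E_n,\sigma_n)$ gives $M:=E_0$ together with iterated isomorphisms $M\cong F^{m*}E_m$, hence a compatible family of level-$m$ actions, hence by Steps 1--2 a $\mathcal{D}_{U/k}$-action on $M$. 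Horizontal morphisms unwind the same way: a map $M\to M'$ is $\mathcal{D}_{U/k}$-linear iff it is $\mathrm{End}_{A^{p^m}}(A)$-linear for all $m$, iff for each $m$ it is $F^{m*}$ of a unique $\phi_m\colon E_m\to E'_m$, and the $\phi_m$ are automatically compatible with the $\sigma_n,\sigma'_n$ by uniqueness of descent; naturality and essential surjectivity are then formal. Finally, the trivial stratified bundle $\mathcal{O}_U$ corresponds to $E_n=\mathcal{O}_U$ with $\sigma_n$ the canonical isomorphism $F^*\mathcal{O}_U\cong\mathcal{O}_U$, that is to $(\mathcal{O}_U,\id)_n$; since the equivalence respects finite direct sums, arbitrary trivial stratified bundles correspond precisely to direct sums of copies of $(\mathcal{O}_U,\id)_n$.

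The main obstacle is Step 1: the identity $D_{A/k}=\bigcup_m\mathrm{End}_{A^{p^m}}(A)$ together with the finiteness and local freeness of $A$ over $A^{p^m}$. This is the only genuinely characteristic-$p$, genuinely smoothness-dependent ingredient --- it fails in characteristic $0$, where $\mathcal{D}_{U/k}$ is not such a union of centralizers --- and everything after it is a formal consequence of faithfully flat descent and Morita theory. A secondary point requiring some bookkeeping is the systematic passage between the relative Frobenius $U\to U^{(m)}$ and the absolute Frobenius $U\to U$, and checking that the affine-local equivalences glue; both are routine since all the functors involved are defined Zariski-locally and compatibly.
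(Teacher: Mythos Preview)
The paper does not give its own proof of this theorem: it is stated and attributed to Katz via \cite[Thm.~1.3]{Gieseker/FlatBundles}, and then immediately used as a black box. So there is nothing to compare your argument against in this paper.

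That said, your proposal is the standard Katz argument and is correct in outline. The identification $\mathcal{D}_{A/k}=\bigcup_{m\geq 0}\mathrm{End}_{A^{p^m}}(A)$ for $A$ smooth over a perfect field, together with the Morita/Cartier-descent interpretation of a level-$m$ action as an isomorphism with an $m$-fold Frobenius pullback, is exactly how the equivalence is built; your handling of the passage between consecutive levels and of morphisms is also the usual one. The only places where one should be slightly careful in a full write-up are (i) the gluing of the affine-local equivalences, which as you note is routine since all constructions are Zariski-local, and (ii) the invocation that stratified coherent sheaves on a regular scheme are locally free, which the paper already cites from \cite[2.17]{BerthelotOgus/Crystalline}. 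Neither of these is a genuine gap.
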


We will use this characterization to prove Theorem \ref{thm:nostratifiedlinebundles}, but
first we need a statement about global functions on simply connected schemes.

\begin{proposition}\label{globalunits}
	If $U$ is a connected normal $k$-scheme of finite type, such that the maximal
	abelian pro-$\ell$-quotient
	$\pi_1(U)^{\ab,(\ell)}$ is trivial for some $\ell\neq p$, then
	$H^0(U,\mathcal{O}_U^\times) = k^\times$. If $k$ has positive characteristic $p$, and
	$\pi_1(U)^{(p)}=1$, then $H^0(U,\mathcal{O}_U)=k$.
\end{proposition}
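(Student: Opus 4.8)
The plan is to feed both assertions into a single mechanism: the Kummer and Artin--Schreier sequences translate the triviality of the relevant quotient of $\pi_1$ into a divisibility property of $A:=H^0(U,\mathcal{O}_U)$, and any element of $A$ outside $k$ then generates an infinite tower of algebraic extensions inside the finitely generated field $k(U)$, which is absurd. Note first that since $U$ is connected and normal it is integral, so $A$ is a domain with $\operatorname{Frac}(A)\subseteq k(U)$, and $k(U)$ is a finitely generated field extension of $k$ because $U$ is of finite type.

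\emph{Input from $\pi_1$.} Since $k$ is algebraically closed and $\ell\neq p$, we have $\mu_{\ell,U}\cong(\Z/\ell)_U$, and the Kummer sequence $1\to\mu_\ell\to\G_m\xrightarrow{\ell}\G_m\to1$ on $U_{\et}$ yields an injection $A^\times/(A^\times)^\ell\hookrightarrow H^1_{\et}(U,\mu_\ell)\cong\Hom_{\cont}(\pi_1(U),\Z/\ell)=\Hom(\pi_1(U)^{\ab,(\ell)},\Z/\ell)=0$; hence $A^\times$ is $\ell$-divisible (and thus $\ell^n$-divisible for all $n$). Likewise, if $\Char k=p>0$ and $\pi_1(U)^{(p)}=1$, then $\pi_1(U)^{\ab,(p)}=1$, and the Artin--Schreier sequence $0\to\Z/p\to\G_a\xrightarrow{\wp}\G_a\to0$ with $\wp=F-\id$ gives an injection $A/\wp(A)\hookrightarrow H^1_{\et}(U,\Z/p)=\Hom(\pi_1(U)^{\ab,(p)},\Z/p)=0$, so $\wp\colon A\to A$ is surjective.

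\emph{The tower argument (run in parallel in both cases).} Suppose $A$ contains some $f\notin k$. Since $k$ is algebraically closed, $f$ is transcendental over $k$, so $k(f)$ is a rational function field. In the first case, using $\ell$-divisibility of $A^\times$, choose $f_0=f$ and, inductively, $f_{n+1}\in A^\times$ with $f_{n+1}^\ell=f_n$; in the second, using surjectivity of $\wp$, choose $t_0=f$ and, inductively, $t_{n+1}\in A$ with $t_{n+1}^p-t_{n+1}=t_n$. In either case the new element is again transcendental over $k$. A short valuation argument on the rational function field $k(T)$ shows that $T$ is not an $\ell$-th power and is not of the form $g^p-g$; hence $X^\ell-f_n$ is irreducible over $k(f_n)$ and $X^p-X-t_n$ is irreducible over $k(t_n)$ (Artin--Schreier), so $[k(f_{n+1}):k(f_n)]=\ell$, resp.\ $[k(t_{n+1}):k(t_n)]=p$. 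Therefore the subfield $L_\infty:=\bigcup_n k(f_n)$, resp.\ $\bigcup_n k(t_n)$, of $k(U)$ is an algebraic extension of $k(f)$ of infinite degree. But $k(U)$ is finitely generated over $k(f)$ (being finitely generated over $k$ and containing $f$), so the algebraic closure of $k(f)$ in $k(U)$ is finite over $k(f)$ --- a contradiction. Hence $A=k$ in the second case, and $A^\times=k^\times$ in the first (any element of $A^\times\setminus k^\times$ lies in $A\setminus k$).

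The one point that deserves to be isolated rather than computed is the classical fact that a finitely generated field extension contains no infinite algebraic subextension of the ground field; granting it, the rest is bookkeeping with the two long exact sequences together with elementary Kummer and Artin--Schreier theory over $k(T)$. I expect the write-up to spend most of its length making the two parallel tower constructions precise and verifying, via valuations on $k(T)$, that the relevant degree-$\ell$ and degree-$p$ polynomials are irreducible.
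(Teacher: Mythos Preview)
Your argument is correct, and it takes a genuinely different route from the paper's proof. The paper argues geometrically: a nonconstant unit (resp.\ function) $f$ gives a dominant morphism $U\to\mathbb{G}_{m,k}$ (resp.\ $U\to\mathbb{A}^1_k$), which by a result of Stix induces an \emph{open} homomorphism $\pi_1(U)\to\pi_1(\mathbb{G}_{m,k})$ (resp.\ $\pi_1(\mathbb{A}^1_k)$); one then reaches a contradiction because any finite-index subgroup of $\pi_1(\mathbb{G}_{m,k})$ has nontrivial pro-$\ell$ abelianization (resp.\ $\pi_1(\mathbb{A}^1_k)$ has infinite pro-$p$ quotient in positive characteristic).

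You instead translate the hypothesis directly via the Kummer and Artin--Schreier sequences into $\ell$-divisibility of $A^\times$ and surjectivity of $\wp$ on $A$, and then produce an infinite algebraic tower inside the finitely generated field $k(U)$. This is more elementary and self-contained: it avoids the black-box input that dominant morphisms of normal varieties induce open maps on \'etale fundamental groups, replacing it with standard field theory (irreducibility of $X^\ell-T$ and $X^p-X-T$ over $k(T)$, and the fact that the relative algebraic closure in a finitely generated extension is finite). The paper's approach, on the other hand, is shorter to state and more transparently geometric, and makes the role of the targets $\mathbb{G}_m$ and $\mathbb{A}^1$ explicit; in effect your Kummer/Artin--Schreier step is an unwinding of exactly why the induced map on $\pi_1$ has large image. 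One small presentational point: in your write-up you begin the tower paragraph with ``Suppose $A$ contains some $f\notin k$'' but for the first assertion you really start from $f\in A^\times\setminus k^\times$; you do say this at the end, but it would read more cleanly if the two cases were separated at the outset.
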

\begin{proof} The argument for the first assertion is due to H\'el\`ene Esnault.
	Assume $f\in H^0(U,\mathcal{O}_U^\times)\setminus k^\times$. Then $f$ induces a
	dominant morphism $f':U\rightarrow \mathbb{G}_{m,k}\cong
	\mathbb{A}^1_k\setminus\left\{ 0 \right\}$, as $f'$ is given by the map $k[x^{\pm 1}]\rightarrow H^0(U,\mathcal{O}_U)$,
	$x\mapsto f$, which is injective if, and only if, $f$ is transcendental over $k$.
	Thus $f'$ induces an \emph{open} morphism $\pi_1(U)\rightarrow
	\pi_1(\mathbb{G}_{m,k})$, see e.g. \cite[Lemma 4.2.10]{Stix/dissertation}. 
	But under our assumption, the maximal abelian pro-$\ell$-quotient of the
	image of this morphism is trivial, so the image of $\pi_1(U)$ cannot have finite index in the
	group $\pi_1(\mathbb{G}_{m,k})$, as in fact
	$\pi_1(\mathbb{G}_{m,k})^{(\ell)}\cong
	\widehat{\Z}^{(\ell)}=
	\Z_\ell$.

	For the second assertion, if $f\in H^0(U,\mathcal{O}_U)\setminus k$, then by the\
	same arguments as above, $f$ induces a dominant morphism $U\rightarrow
	\mathbb{A}_k^1$, and hence an open map $\pi_1(U)\rightarrow\pi_1(\mathbb{A}^1_k)$.
	For $k$ of positive characteristic it is known that $\pi_1(\mathbb{A}^1_k)$ has an
	infinite maximal pro-$p$-quotient (in fact it is a free pro-$p$-group of infinite rank: by
	\cite[1.4.3, 1.4.4]{Katz/LocalToGlobal} we have $H^2(\pi_1(U),\F_p)=0$, so $\pi_1(U)^{(p)}$
	is free pro-$p$ of rank $\dim_{\F_p}H^1(\A_k^1,\F_p)=\#k$). Thus the image of $\pi_1(U)$ in this group can only have finite index, if
	$\pi_1(U)^{(p)}\neq 1$.
\end{proof}
\begin{remark}\label{rem:affinevars}
	Proposition \ref{globalunits} gives a proof of the well known fact that over a
	field $k$ of positive characteristic, unlike in characteristic $0$, no affine $k$-scheme of positive dimension is simply connected.
\end{remark}

\begin{lemma}\label{isoclassesgeneral}Let $U$ be a connected normal $k$-scheme of
	finite type. If $\pi_1(U)^{\ab,(\ell)}=1$ for some prime $\ell\neq p$, then the isomorphism class of a stratified line
	bundle $L=\left( L_n,\sigma_n \right)_n$ is uniquely determined by the isomorphism
	classes of the $L_n$. 
\end{lemma}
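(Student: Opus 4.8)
The plan is to use the rigidity of global units on $U$ provided by Proposition~\ref{globalunits}. Let $L=(L_n,\sigma_n)_n$ and $L'=(L'_n,\sigma'_n)_n$ be stratified line bundles on $U$ with $L_n\cong L'_n$ as line bundles for every $n$; I want to produce an isomorphism of stratified bundles $L\to L'$, i.e.\ a family of isomorphisms $\phi_n\colon L_n\to L'_n$ with $\phi_n\circ\sigma_n=\sigma'_n\circ F^*\phi_{n+1}$ for all $n$, where $F$ denotes the absolute Frobenius. First I would choose \emph{arbitrary} isomorphisms $\psi_n\colon L_n\stackrel{\sim}{\to}L'_n$ of line bundles. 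These need not be horizontal, but the failure is measured by a scalar: the line bundle $L'_n$ has only the automorphisms coming from $H^0(U,\mathcal{O}_U^\times)$, and since $U$ is connected, normal, of finite type with $\pi_1(U)^{\ab,(\ell)}=1$, Proposition~\ref{globalunits} gives $H^0(U,\mathcal{O}_U^\times)=k^\times$; hence there are unique units $c_n\in k^\times$ with $\psi_n\circ\sigma_n=c_n\cdot(\sigma'_n\circ F^*\psi_{n+1})$.

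Next I would correct the $\psi_n$ by constants. For $\lambda_n\in k^\times$ the family $(\lambda_n\psi_n)_n$ is horizontal if and only if $(\lambda_n\psi_n)\circ\sigma_n=\sigma'_n\circ F^*(\lambda_{n+1}\psi_{n+1})$ for every $n$. Since pulling back multiplication by a constant $\lambda_{n+1}$ along the \emph{absolute} Frobenius yields multiplication by $\lambda_{n+1}^p$, this condition is, in view of the relation defining $c_n$, the recursion $\lambda_{n+1}^p=c_n\lambda_n$. As $k$ is algebraically closed I may set $\lambda_0=1$ and choose, inductively, $\lambda_{n+1}$ to be a $p$-th root of $c_n\lambda_n$. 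Then $(\lambda_n\psi_n)_n$ is horizontal and fibrewise an isomorphism, hence an isomorphism $L\stackrel{\sim}{\to}L'$ of stratified line bundles.

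The proof is short, and its only real ingredients are Proposition~\ref{globalunits} and the fact that $k=\bar k$, which is precisely what makes the Frobenius-twisted recursion $\lambda_{n+1}^p=c_n\lambda_n$ solvable; I do not anticipate a genuine obstacle. The one point requiring care is the identity $F^*(\lambda\cdot\id)=\lambda^p\cdot\id$ for the absolute Frobenius, which is where the exponent $p$ enters. (The converse implication is trivial, so the content of the lemma is that the isomorphism type of the plain sequence $(L_n)_n$, forgetting the $\sigma_n$, already determines the stratified line bundle.)
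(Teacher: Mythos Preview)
Your argument is correct and follows essentially the same route as the paper: choose arbitrary isomorphisms $\psi_n$, invoke Proposition~\ref{globalunits} to see that the defect $\psi_n\sigma_n(F^*\psi_{n+1})^{-1}(\sigma'_n)^{-1}$ lies in $k^\times$, and then use algebraic closedness of $k$ to extract $p$-th roots and correct inductively. The paper carries out the first step explicitly and then writes ``We can continue this process''; you make the recursion $\lambda_{n+1}^p=c_n\lambda_n$ explicit, which is the same content.
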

\begin{proof}
	This follows from Proposition \ref{globalunits}, and the argument is essentially
	contained in the proof of \cite[Prop. 1.7]{Gieseker/FlatBundles}: Let $M:=(M_n,\tau_n)$ be
	a second stratified line bundle on $U$ and $u_n:L_n\rightarrow M_n$
	isomorphisms of $\mathcal{O}_{X}$-modules. We will construct an
	isomorphism of stratified line bundles $L\rightarrow M$. Consider the
	following diagram:
	\[\xymatrix{
	L_0\ar[d]_{u_0}^\cong\ar[r]^{\sigma_0} & F^*L_1\\
	M_0\ar[r]^{\tau_0}&F^*M_1}\]
	The automorphism $\lambda:=\tau_0u_0\sigma_0^{-1}F^*(u_1^{-1})$ of $F^*M_1$
	corresponds to a global unit $\lambda\in \Gamma(U,\mathcal{O}_{U}^\times)$.
	By Proposition \ref{globalunits}, $U$ has only constant global
	units, so there is a $p$-th root $\lambda^{1/p}$ of $\lambda$, which
	defines an automorphism of $M_1$ such that $F^*\lambda^{1/p}=\lambda$.
	Defining $f_0:=u_0$ and $f_1:=\lambda^{1/p}u_1$ gives the first two
	steps of defining an isomorphism of stratified bundles $f:L\rightarrow
	M$. We can continue this process.
\end{proof}

The following fact from group theory is elementary.
\begin{lemma}\label{grouplemma}
	Let $G$ be a finitely generated abelian group and $p$ a prime number. A nontrivial element $L\in G$ is
	infinitely $p$-divisible if and only if $L$ has finite order prime to
	$p$. 
\end{lemma}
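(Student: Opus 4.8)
The plan is to reduce everything to the structure theorem for finitely generated abelian groups. Write $G\cong\Z^r\oplus T$, where $T$ is the torsion subgroup, which is finite, and decompose $T=T_{(p)}\oplus T_{(p')}$ into its $p$-primary part and its prime-to-$p$ part. I will show that the subgroup of infinitely $p$-divisible elements, that is $\bigcap_{n\geq 1}p^n G$, is exactly $T_{(p')}$. The lemma is then immediate: the nonzero elements of $T_{(p')}$ are precisely the nonzero elements of $G$ of finite order prime to $p$.

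First I would note that it is harmless to interpret ``$L$ is infinitely $p$-divisible'' as the condition $L\in\bigcap_n p^n G$, rather than as the existence of an infinite chain $L=pL_1=p^2L_2=\cdots$: the chain condition clearly implies $L\in\bigcap_n p^n G$, and conversely, once we know $\bigcap_n p^n G=T_{(p')}$, multiplication by $p$ is bijective on $T_{(p')}$, so a genuine chain can always be produced. Then the computation of $\bigcap_n p^n G$ splits over the three direct summands, since multiplication by $p^n$ acts componentwise. On $\Z^r$ one has $\bigcap_n p^n\Z^r=0$. On $T_{(p)}$ there is an $N$ with $p^N T_{(p)}=0$, so the intersection again vanishes. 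On $T_{(p')}$ the integer $p$ is coprime to the order of the group, hence multiplication by $p$ is an automorphism and $p^n T_{(p')}=T_{(p')}$ for every $n$, so the intersection is all of $T_{(p')}$. Summing up gives $\bigcap_n p^n G=T_{(p')}$, which proves both implications at once.

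I do not expect any genuine obstacle here; the statement is elementary once the structure theorem is invoked. The only point that deserves a moment's care is reconciling the two possible readings of ``infinitely $p$-divisible'', which is disposed of in the reduction step above.
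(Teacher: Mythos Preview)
Your argument is correct. The paper does not actually supply a proof of this lemma; it merely records the statement as an elementary fact from group theory and moves on. Your reduction via the structure theorem, together with the componentwise computation of $\bigcap_{n} p^n G$ on $\Z^r$, $T_{(p)}$, and $T_{(p')}$, is exactly the kind of verification the paper is implicitly leaving to the reader, and your remark reconciling the two readings of ``infinitely $p$-divisible'' is a nice touch that makes the argument self-contained.
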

%
%

We can now easily prove the main result of this section.

\begin{proof}[{Proof of Theorem \ref{thm:nostratifiedlinebundles}}.] This is an adaptation of
	an argument from the introduction of \cite{Esnault/Giesecker}. By Lemma \ref{isoclassesgeneral} we only need to
	show that the classes of $L_n$ in $\Pic U$ are all trivial. Note that
	$L_n$ is infinitely $p$-divisible in $\Pic U$ for all $n$. For 
	regular $U$ as in the assertion, we know from Theorem \ref{picfg} that $\Pic U$ is
	finitely generated, and hence by Lemma \ref{grouplemma} it follows
	that $L_n$ is torsion of order prime to $p$ in $\Pic U$. But Kummer theory shows
	that $\Pic U$ does not have nontrivial prime-to-$p$ torsion.
	
\end{proof}
\section{A Complex Analog}\label{section:complex}
In this section, we reproduce a complex analog of Theorem
\ref{thm:nostratifiedlinebundles} due to H\'el\`ene Esnault.

\begin{definition}
	Let $\Pic^\nabla(U)$ denote the group of isomorphism classes of pairs
	$(L,\nabla)$, where $L$ is an invertible $\mathcal{O}_U$-module and $\nabla$ an
	integrable connection on $L$. The group structure is given by tensor product
	connections.
\end{definition}
\begin{remark}
	For a regular scheme over a field of characteristic $0$, the notion of stratified
	bundles is equivalent to the notion of vector bundles with integrable connection.
\end{remark}

Let $\Omega_{U/C}^\times$ denote the complex of abelian sheaves in the Zariski topology
\[\mathcal{O}^\times_X\stackrel{\dlog}{\rightarrow} \Omega_{U/\C}^1\rightarrow
\Omega_{U/\C}^2\rightarrow \ldots,\]
and note that we obtain a short exact sequence of complexes
\[ 0\rightarrow \Omega^{\geq 1}_{U/k}\rightarrow \Omega^\times_{U/k}\rightarrow
\mathcal{O}_U^\times \rightarrow 0.\]
From this we obtain a homomorphism $\tilde{c}_1:\Pic U \rightarrow
\mathbb{H}^2(U,\Omega_{U/k}^{\geq 1})$, which can be described explicitly  as follows: Let
$L\in \Pic U$ be a line bundle. As $\Omega^{\geq 1}_{U/\C}$ is a complex of coherent
sheaves, we can compute its hypercohomology directly via \v{C}ech theory. Let
$\mathcal{U}=\{U_i\}_i$ be an open affine covering of $U$ trivializing $L$, such that
$L|_{U_i}=e_i\mathcal{O}_{U_i}$, and $e_i\xi_{ij}=e_j$ on $U_i\cap U_j=:U_{ij}$, with $\xi_{ij}\in
\mathcal{O}_{U_{ij}}^\times(U_{ij})$. If
$\mathcal{C}^p(\mathcal{U},\Omega^q_{U/\C})=\prod_{i_1,\ldots, i_p}
\Omega^q_{U/\C}(U_{i_1\cdots i_p})$ denotes the associated \v{C}ech double complex, then
$\tilde{c}_1$ is induced by \[L\mapsto ((\dlog \xi_{ij})_{ij},0)\in
\mathcal{C}^1(\mathcal{U},\Omega^1_{U/\C})\oplus
\mathcal{C}^0(\mathcal{U},\Omega^2_{U/\C})=\Tot^2(\mathcal{C}^\bullet(\mathcal{U},\Omega^{\geq
1}_{U/\C})),\] where $\Tot$ denotes the total complex of a double complex. From this
description it follows that $\tilde{c}_1$, composed with the natural map
$\mathbb{H}^2(U,\Omega_{U/k}^{\geq 1})\rightarrow
\mathbb{H}^2(U,\Omega^\bullet_{U/k})=H^2(U,\C)$, is the usual complex first Chern class.

\begin{proposition}\label{prop:exactcomplexsequence}
	If $U$ is a regular, finite type $\C$-scheme, then the natural morphism
	$\Pic^\nabla(U)\rightarrow \Pic(U)$, $(L,\nabla)\mapsto L$, fits in a short exact
	sequence
	\[0\rightarrow H^0(U,\Omega^1_{U/\C,\cls})/\dlog
	H^0(U,\mathcal{O}_U^\times)\rightarrow
	\Pic^\nabla(U)\rightarrow \ker(\tilde{c}_1)\rightarrow 0,\]
	where $\Omega_{U/\C,\cls}^1$ denotes the closed $1$-forms.
\end{proposition}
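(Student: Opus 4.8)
The approach is to exploit the short exact sequence of complexes
\[0\rightarrow \Omega^{\geq 1}_{U/\C}\rightarrow \Omega^\times_{U/\C}\rightarrow
\mathcal{O}_U^\times \rightarrow 0,\]
and to identify $\Pic^\nabla(U)$ with the hypercohomology group $\mathbb{H}^1(U,\Omega^\times_{U/\C})$. Recall that $\mathbb{H}^1(U,\mathcal{O}_U^\times)=\Pic(U)$ and $\mathbb{H}^2(U,\Omega^{\geq 1}_{U/\C})$ is the target of $\tilde{c}_1$; the connecting map $\Pic(U)=\mathbb{H}^1(U,\mathcal{O}_U^\times)\rightarrow \mathbb{H}^2(U,\Omega^{\geq 1}_{U/\C})$ is precisely $\tilde{c}_1$, as one checks from the \v{C}ech description given above (a line bundle glued by $\xi_{ij}$ is sent to the cocycle $(\dlog\xi_{ij})$). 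Thus the long exact hypercohomology sequence reads
\[\mathbb{H}^1(U,\Omega^{\geq 1}_{U/\C})\rightarrow \Pic^\nabla(U)\rightarrow \Pic(U)\xrightarrow{\tilde{c}_1}\mathbb{H}^2(U,\Omega^{\geq 1}_{U/\C}),\]
which already gives surjectivity of $\Pic^\nabla(U)\rightarrow\ker(\tilde{c}_1)$ and identifies the kernel of $\Pic^\nabla(U)\rightarrow\Pic(U)$ with a quotient of $\mathbb{H}^1(U,\Omega^{\geq 1}_{U/\C})$.

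The main work is twofold. First, I must justify the identification $\Pic^\nabla(U)\cong \mathbb{H}^1(U,\Omega^\times_{U/\C})$: a class in the latter is represented, in the \v{C}ech complex for a trivializing cover $\mathcal{U}=\{U_i\}$, by a pair consisting of gluing units $(\xi_{ij})\in\mathcal{C}^1(\mathcal{U},\mathcal{O}_U^\times)$ and local $1$-forms $(\omega_i)\in\mathcal{C}^0(\mathcal{U},\Omega^1_{U/\C})$ with $\dlog\xi_{ij}=\omega_j-\omega_i$ (the cocycle condition in $\Tot^1$), which is exactly the data of a line bundle $L$ together with local connection forms $\omega_i$ that patch into a global connection $\nabla$; integrability is automatic since a connection on a line bundle has curvature $d\omega_i$, which is globally defined, but the $\Omega^{\geq 1}$ differential sends $(\xi_{ij},\omega_i)$ forward involving $d\omega_i$, and closedness of the representative in $\Omega^\times$ forces... actually integrability is automatic for rank one only after one notes the curvature $d\omega_i=d\omega_j$ glues, so $\nabla$ is always integrable when $L$ is a line bundle — hence no constraint is lost. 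Coboundaries correspond exactly to isomorphisms of pairs $(L,\nabla)$, so the identification is an isomorphism of groups. Second, I must compute $\mathbb{H}^1(U,\Omega^{\geq 1}_{U/\C})$. Since $\Omega^{\geq 1}_{U/\C}$ is a complex of coherent sheaves on an affine-locally computable space, the hypercohomology spectral sequence $E_1^{p,q}=H^q(U,\Omega^p_{U/\C})\Rightarrow \mathbb{H}^{p+q}(U,\Omega^{\geq 1}_{U/\C})$ in low degrees gives an exact sequence $0\rightarrow H^0(U,\Omega^1_{U/\C})/\image(d:H^0(U,\mathcal{O}_U)\rightarrow\ldots)$ — wait, more precisely the only term contributing to $\mathbb{H}^1$ is $E_2^{1,0}$ from $H^0(U,\Omega^1)\xrightarrow{d}H^0(U,\Omega^2)$ together with a possible $E_2^{0,1}$, but since the complex starts in degree $1$ the $\mathbb{H}^1$ is $\ker(H^0(\Omega^1)\xrightarrow{d}H^0(\Omega^2))/\image(d\log)$...

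Let me be careful: the subtle point is that $\Omega^{\geq 1}_{U/\C}$ has $\mathcal{O}^\times_U$ feeding into it only in $\Omega^\times$, not inside $\Omega^{\geq 1}$ itself, so $\mathbb{H}^1(U,\Omega^{\geq 1}_{U/\C})=H^0(U,\Omega^1_{U/\C,\cls})$ where $\cls$ denotes closed forms (this is simply the kernel of $d$ on global $1$-forms, as $H^0$ of the stupidly-truncated complex in degree $1$). The quotient by $\dlog H^0(U,\mathcal{O}_U^\times)$ then arises precisely because the map $\mathbb{H}^1(U,\Omega^{\geq 1}_{U/\C})\rightarrow\Pic^\nabla(U)$ in the long exact sequence is not injective: its kernel is the image of $\mathbb{H}^1(U,\mathcal{O}^\times_U)[\text{shifted}]$... no — the kernel is the image of the \emph{preceding} map $\mathbb{H}^0(U,\mathcal{O}^\times_U)=H^0(U,\mathcal{O}_U^\times)\xrightarrow{\delta}\mathbb{H}^1(U,\Omega^{\geq 1}_{U/\C})$, and tracing $\delta$ through the \v{C}ech description shows it is $f\mapsto \dlog f$. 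So the long exact sequence yields exactly
\[0\rightarrow H^0(U,\Omega^1_{U/\C,\cls})/\dlog H^0(U,\mathcal{O}_U^\times)\rightarrow\Pic^\nabla(U)\rightarrow\ker(\tilde{c}_1)\rightarrow 0.\]

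The step I expect to be the main obstacle is pinning down, carefully and with correct signs, the \v{C}ech-level identification of the two connecting maps — that $\mathbb{H}^1(\mathcal{O}^\times_U)\rightarrow\mathbb{H}^2(\Omega^{\geq 1})$ is $\tilde c_1$ and that $\mathbb{H}^0(\mathcal{O}^\times_U)\rightarrow\mathbb{H}^1(\Omega^{\geq 1})$ is $\dlog$ — so that the abstract long exact sequence matches the stated one on the nose; the rest is formal diagram-chasing plus the (routine but worth stating) computation that $\mathbb{H}^1$ of the truncated de Rham complex is just the global closed $1$-forms.
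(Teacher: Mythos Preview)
Your approach is correct and genuinely different from the paper's. The paper argues entirely by hand at the level of \v{C}ech cocycles, never mentioning $\mathbb{H}^1(U,\Omega^\times_{U/\C})$: it checks directly that $\omega\mapsto(\mathcal{O}_U,d+\omega)$ has kernel $\dlog H^0(\mathcal{O}_U^\times)$ and image equal to $\ker(\Pic^\nabla(U)\to\Pic(U))$, and then separately computes that the image of $\Pic^\nabla(U)\to\Pic(U)$ is $\ker\tilde{c}_1$ by writing out the local connection forms $\omega_i=\nabla(e_i)$ and matching $\omega_i-\omega_j=\dlog\xi_{ij}$ against the \v{C}ech description of $\tilde{c}_1$. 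Your route packages all of this into the long exact hypercohomology sequence for $0\to\Omega^{\geq 1}\to\Omega^\times\to\mathcal{O}^\times\to 0$ after identifying $\Pic^\nabla(U)\cong\mathbb{H}^1(U,\Omega^\times_{U/\C})$; once that identification and the computation $\mathbb{H}^1(\Omega^{\geq 1})=H^0(\Omega^1_{\cls})$ are in place, the sequence falls out formally. This is more conceptual and explains \emph{why} the sequence exists; the paper's version is more elementary and self-contained.

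One correction to your write-up: the sentence ``$\nabla$ is always integrable when $L$ is a line bundle'' is false (take $\nabla=d+x\,dy$ on $\mathcal{O}_{\A^2_\C}$; its curvature is $dx\wedge dy\neq 0$). The correct reason the identification $\Pic^\nabla(U)\cong\mathbb{H}^1(U,\Omega^\times_{U/\C})$ holds is the one you started to give and then abandoned: the cocycle condition for $((\xi_{ij}),(\omega_i))$ in $\Tot^1$ has \emph{three} components --- the \v{C}ech condition on $(\xi_{ij})$, the compatibility $\omega_j-\omega_i=\dlog\xi_{ij}$, and the vanishing $d\omega_i=0$ coming from the differential into $\mathcal{C}^0(\mathcal{U},\Omega^2)$. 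That last condition is precisely integrability, so $\mathbb{H}^1(\Omega^\times)$ really does parametrize line bundles with integrable connection, and your long exact sequence argument goes through.
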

\begin{proof}
	Consider the map $\varphi: H^0(U,\Omega_{U/\C,\cls}^1)\rightarrow \Pic^\nabla(U)$ given by
	$\omega\mapsto (\mathcal{O}_U,d+\omega)$. Every integrable connection on $\mathcal{O}_U$ is
	given by $d+\omega$ for a closed $1$-form $\omega$, so $H^0(U,\Omega_{U/\C,\cls})$ maps onto
	the kernel of $\Pic^\nabla(U)\rightarrow \Pic U$.
	The kernel of $\varphi$ consists precisely of those $1$-forms
	$\omega$ such that there is a $\lambda \in \mathcal{O}_U^\times$ with
	$d(1)=0=d\lambda +\lambda \omega$, i.e.~$\ker(\varphi)=\dlog \mathcal{O}_U^\times$. We
	have proven that the sequence
	\[
	0\rightarrow H^0(U,\Omega^1_{U/\C,\cls})/\dlog
	H^0(U,\mathcal{O}_U^\times)\rightarrow
	\Pic^\nabla(U)\rightarrow \Pic(U)\]	
	is exact. 

	It remains to determine the image of $\Pic^\nabla(U)\rightarrow \Pic(U)$. Let
	$(L,\nabla)$ be a line bundle with integrable connection. Let $\mathcal{U}=\{U_i\}$ be an open affine covering of $U$
	trivializing $L$, such that $L|_{U_i}=e_i$, and $e_i\xi_{ij}=e_j$ on $U_i\cap
	U_j=:U_{ij}$, with $\xi_{ij}\in \mathcal{O}_{U_{ij}}^\times$. 
	We have seen that $\tilde{c}_1$ is given by
	\[L\mapsto ((\dlog \xi_{ij})_{ij},0)\in
	\mathcal{C}^1(\mathcal{U},\Omega^1_{U/\C})\oplus
	\mathcal{C}^0(\mathcal{U},\Omega^2_{U/\C})=\Tot^2(\mathcal{C}^\bullet(\mathcal{U},\Omega^{\geq
	1}_{U/\C})),\]
	Now if we write $\omega_i:=\nabla(e_i)\in \Omega_{U/\C,\cls}(U_i)$, then we get on
	$U_{ij}$
	\[ \omega_i\xi_{ij}e_j=\omega_i e_i=\nabla(e_i)=\nabla(\xi_je_j)=(\xi_{ij}w_j +d\xi_{ij})e_j\]
	and thus $\omega_i-\omega_j|_{U_{ij}}=\dlog(\xi_{ij})$, which means that
	$(L,\nabla)$ maps to $\ker(\tilde{c}_1)$.

	Conversely, if $L$ is a line bundle with $\tilde{c}_1(L)=0$, then there is an
	open affine covering $\mathcal{U}=\{U_i\}$ trivializing $L$ with
	$L|_{U_i}=e_i \mathcal{O}_U$, and $e_i\xi_{ij}=e_j$ on $U_{ij}$, such that
	$\tilde{c}_1(L)=( (\dlog \xi_{ij})_{ij}, 0)$ can be represented as $( (
	\omega_{i}|_{U_{ij}}-\omega_j|_{U_{ij}})_{ij},
	0)\in \mathcal{C}^1(\Omega^1_{U/\C})\oplus
	\mathcal{C}^0(\Omega^2_{U/\C})$, with $\omega_{i}\in \Omega_{U/\C}^1(U_i)$.
	Defining $\nabla(e_i)=\omega_i$ then defines an integrable connection on $L$, so
	the image of $\Pic^\nabla(U)\rightarrow \Pic(U)$ is precisely $\ker(\tilde{c}_1)$.
\end{proof}

\begin{theorem}\label{thm:complexanalogue}
	If $U$ is a regular, finite type $\C$-scheme, such that $\pi_1(U)^{\et,\ab}=0$, then
	$H^0(U,\Omega^1_{U/\C,\cls})\cong \Pic^\nabla(U)$, via the map $\omega\mapsto
	(\mathcal{O}_U,d+\omega)$.
\end{theorem}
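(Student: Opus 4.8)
The plan is to read the theorem off from Proposition~\ref{prop:exactcomplexsequence}, whose short exact sequence
\[0\longrightarrow H^0(U,\Omega^1_{U/\C,\cls})/\dlog H^0(U,\mathcal{O}_U^\times)\longrightarrow \Pic^\nabla(U)\longrightarrow \ker(\tilde{c}_1)\longrightarrow 0\]
has its left-hand map induced by $\omega\mapsto(\mathcal{O}_U,d+\omega)$. It therefore suffices to establish two things: (a) $H^0(U,\mathcal{O}_U^\times)=\C^\times$, so that $\dlog H^0(U,\mathcal{O}_U^\times)=0$ and the quotient on the left is just $H^0(U,\Omega^1_{U/\C,\cls})$; and (b) $\tilde{c}_1\colon\Pic(U)\to\mathbb{H}^2(U,\Omega^{\geq1}_{U/\C})$ is injective, so that $\ker(\tilde{c}_1)=0$. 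Granting (a) and (b), the exact sequence collapses to the isomorphism $H^0(U,\Omega^1_{U/\C,\cls})\cong\Pic^\nabla(U)$, $\omega\mapsto(\mathcal{O}_U,d+\omega)$, which is the assertion.

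Statement (a) is immediate from Proposition~\ref{globalunits}: $U$ is regular and connected, hence normal, and $\pi_1^{\et}(U)^{\ab}=0$ makes $\pi_1(U)^{\ab,(\ell)}=1$ for every prime $\ell$, in particular for one with $\ell\neq p$ (here $p=0$); so $H^0(U,\mathcal{O}_U^\times)=\C^\times$, and $\dlog$ annihilates constants.

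For (b) I would prove the stronger statement that the topological first Chern class $c_1\colon\Pic(U)\to H^2(U,\C)$ is injective; since $c_1$ is $\tilde{c}_1$ composed with the natural map $\mathbb{H}^2(U,\Omega^{\geq1}_{U/\C})\to\mathbb{H}^2(U,\Omega^\bullet_{U/\C})=H^2(U,\C)$ (as noted above the statement), this forces $\tilde{c}_1$ to be injective. Because $\Char\C=0$ we have resolution of singularities, giving an open immersion $j\colon U\hookrightarrow X$ with $X$ smooth and proper over $\C$ and $D:=X\setminus U=\bigcup_{i=1}^{m}D_i$ a simple normal crossings divisor; then $\Pic(X)\to\Pic(U)$ is surjective with kernel generated by the $[D_i]$. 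From the surjection $\pi_1(U)\to\pi_1(X)$ we get $\pi_1(X)^{\ab}=0$, hence $\Pic^0_{X/\C}(\C)=0$ by the argument in the Introduction, so $c_1\colon\Pic(X)=\NS(X)\to H^2(X^{\mathrm{an}},\Z)$ is injective. Also, by Grothendieck's comparison theorem $\pi_1^{\et}(U)$ is the profinite completion of $\pi_1^{\mathrm{top}}(U^{\mathrm{an}})$, so the finitely generated group $H_1(U^{\mathrm{an}},\Z)$ has trivial profinite completion and therefore vanishes; hence $H^2(U^{\mathrm{an}},\Z)$ is torsion free. Now let $L\in\Pic(U)$ with $c_1(L)=0$ in $H^2(U,\C)$; torsion freeness gives $c_1(L)=0$ already in $H^2(U^{\mathrm{an}},\Z)$. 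Lift $L$ to $\widetilde{L}\in\Pic(X)$. Then $c_1(\widetilde{L})\in H^2(X^{\mathrm{an}},\Z)$ restricts to $0$ in $H^2(U^{\mathrm{an}},\Z)$, so it lies in the kernel of $H^2(X^{\mathrm{an}},\Z)\to H^2(U^{\mathrm{an}},\Z)$, which by the localization (Gysin) sequence of the pair $(X^{\mathrm{an}},U^{\mathrm{an}})$ together with purity is spanned by the cycle classes $[D_i]$. Thus $c_1(\widetilde{L})=\sum_i a_i[D_i]=c_1(\mathcal{O}_X(\sum_i a_iD_i))$ for suitable $a_i\in\Z$, and injectivity of $c_1$ on $\Pic(X)$ forces $\widetilde{L}\cong\mathcal{O}_X(\sum_i a_iD_i)$; restricting to $U$ gives $L\cong\mathcal{O}_U$. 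Hence $c_1$, and a fortiori $\tilde{c}_1$, is injective. The routine parts are (a) and the bookkeeping around Proposition~\ref{prop:exactcomplexsequence}; the one step needing care is identifying the kernel of $H^2(X^{\mathrm{an}},\Z)\to H^2(U^{\mathrm{an}},\Z)$ with the span of the boundary divisor classes, where the normal crossings hypothesis enters --- this is the main, though classical, obstacle.
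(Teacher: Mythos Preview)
Your argument is correct and follows the same route as the paper: reduce via Proposition~\ref{prop:exactcomplexsequence} and Proposition~\ref{globalunits} to the injectivity of $c_1$ on $\Pic U$, then compactify, use $\pi_1(X)^{\ab}=0$ to get injectivity of $c_1$ on $\Pic X$, and identify the kernel of $H^2(X)\to H^2(U)$ with the span of the boundary divisor classes via purity. The only difference is cosmetic: you run the Gysin/purity step with $\Z$-coefficients (after observing $H^2(U^{\mathrm{an}},\Z)$ is torsion free), whereas the paper works in $H^2(-,\C)$ throughout; your version in fact handles the integrality of the coefficients $a_i$ a bit more transparently.
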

\begin{proof}
	By Proposition \ref{globalunits} we have $H^0(U,\mathcal{O}_U^\times)=\C^\times$,
	so Proposition \ref{prop:exactcomplexsequence} shows that it suffices to prove
	that $\ker(\tilde{c}_1)=0$, and for this it is enough to prove that $c_1:\Pic
	U\rightarrow H^2(U,\C)$ is injective. 
	
	Let $L$ be a line bundle with
	$c_1(L)=0$. Let $X$ be a regular compactification of $U$, such that
	$Y:=X\setminus U$ is a normal crossings divisor, and choose an extension
	$\overline{L}\in \Pic X$ of $L$. Note that $\pi_1(X)^{\ab}=1$, and that this
	implies that
	$c_1:\Pic(X)\hookrightarrow H^2(X,\C)$ is injective, as $\Pic(X)=\NS(X)$ is a
	free abelian group of finite rank. Thus $c_1(\overline{L})\in \ker
	(H^2(X,\C)\rightarrow H^2(U,\C))$. Since $\pi_1(U)^{\ab}$ is trivial, we have
	$\ker(H^2(X,\C)\rightarrow H^2(U,\C))=H^2_Y(X,\C)$, and by purity $H^2_Y(X,\C)\cong
	\bigoplus_i c_1(Y_i)\C$ (see \cite[Cycle, 2.2.6, 2.2.8, 2.1.4]{SGA45}), if $Y_1,\ldots, Y_n$ are the regular components of $Y$. This implies that there is some
	$M\in \Pic(X)$, such that $M=\sum_i a_i Y_i$, with $c_1(\overline{L}\otimes M)=0$,
	so $M=\overline{L}^{-1}$. But $M|_U=\mathcal{O}_U$, so $L=\mathcal{O}_U$.
\end{proof}
\begin{remark}\label{rem:end}
	The difference between Theorem \ref{thm:nostratifiedlinebundles} and Theorem
	\ref{thm:complexanalogue} is related to the failure of the second assertion
	of Proposition \ref{globalunits} in characteristic $0$: There are simply connected
	complex varieties $U$ with nonconstant global functions, and on such $U$ the
	conclusion of Theorem \ref{thm:nostratifiedlinebundles} fails, as 
	$H^0(U,\Omega^1_{U/\C,\cls})\neq 0$. 

	On the other hand, from Deligne's Riemann-Hilbert
	correspondence, we know that there are no nontrivial \emph{regular singular}
	stratified bundles on $U$.
\end{remark}

%
\bibliographystyle{amsalpha}
\bibliography{alggeo}

\end{document}